\renewcommand{\a}{{\mathcal A}{}}
\newcommand{\D}{{\mathcal D}{}}
\newcommand{\N}{{\mathbb N}{}}
\newcommand{\R}{\mathbb R}
\newcommand{\ES}{{\mathcal S}{}}
\newcommand{\ol}{\overline}
\newcommand{\ul}{\underline}
\newcommand{\st}{\mathrm{st}}
\renewcommand{\ES}{{\mathcal S}{}}
\newcommand{\rp}{respectively }
\newcommand{\om}{\omega}
\newcommand{\Om}{\Omega}
\newtheorem{thm}{Theorem}[section]
\newtheorem{cor}[thm]{Corollary}
\newtheorem{prop}[thm]{Proposition}
\newtheorem{lemma}[thm]{Lemma}
\theoremstyle{remark}
\newtheorem{rem}[thm]{Remark}
\theoremstyle{definition}
\newtheorem{df}[thm]{Definition}
\def\sref#1{Section~\ref{#1}}
\def\tref#1{Theorem~\ref{#1}}
\def\rref#1{Remark~\ref{#1}}
\def\cref#1{Corollary~\ref{#1}}
\def\pref#1{Proposition~\ref{#1}}
\def\lref#1{Lemma~\ref{#1}}
\newcommand{\intro}[1]
{\renewcommand{\thesection}{\fnsymbol{section}}
\setcounter{section}{-1}
\section{#1}
\renewcommand{\thesection}{\arabic{section}}
}
\title{PL approximations of symplectic manifolds}
\author{M\'elanie Bertelson \& Julie Distexhe}
\begin{document}

\maketitle

\tableofcontents

\intro{Introduction}

This paper stems from Eliashberg's idea to develop a theory of piecewise linear symplectic topology \cite{YE}. The central object of this theory is a \emph{piecewise linear symplectic manifold}, that is a piecewise linear (PL) manifold $P$, of even dimension $2n$, endowed with a piecewise constant symplectic structure $\om$. The latter is a collection $(\om_\sigma)_{\sigma}$ of constant symplectic forms on the various top-dimensional simplices of a triangulation of $P$ such that for any two adjacent $2n$-simplices $\sigma$ and $\sigma'$, the restriction of $\om_\sigma$ to $\sigma \cap \sigma'$ agree with that of $\omega_{\sigma'}$. Forms satisfying the latter condition are called Whitney forms in \cite{sullivan}. (Definitions pertaining to PL topology are to be found in \sref{Prelims}.)

\smallskip

One of the very first questions is how these compare to \ul{smooth} symplectic manifolds. For instance, is any smooth symplectic manifold $(M, \omega)$ piecewise differentiably equivalent to a PL one~? In other words, does there exists a PL manifold $P$ and a piecewise differentiable maps $h : P \to M$ for which $h^*\om$ is piecewise constant~? If yes, we say that $(M, \om)$ can be triangulated. 

\smallskip

This question is closely related to another very first question which is to decide whether a PL symplectic manifold $(W, \om)$ satisfies Darboux, in the sense that for any point $w$ of $W$, there exists a PL homeomorphism $h$ between some  neighborhood of $w$ in $W$ and an open subset of $\R^{2n}$, through which $\om$ corresponds to the standard symplectic form $\om_o = \sum_{i=1}^ndx^i \wedge dy^i$ on $\R^{2n}$. Indeed, a PL symplectic manifold which triangulates a smooth one necessarily satisfies Darboux (see \cite{D19}).

\smallskip

Both questions are surprisingly difficult to settle. It is not unlikely that Darboux fails in general. There might indeed exist some type of curvature for PL symplectic forms. Triangulations, on the other hand, exist for $2$-dimensional manifolds or products of such~: we indeed prove, in the first part of the paper, that smooth \ul{volume} forms can be uniquely triangulated. We also prove, in the second part of the paper, that smooth symplectic manifold can be approximated, in the $C^1$ sense, by PL ones. One can moreover ensure that the piecewise constant volume associated to some of those approximating PL symplectic forms triangulate the smooth symplectic volume.

\smallskip

The literature on PL symplectic topology is quite scarce. We would like to mention Bernhard Gratza's unpublished thesis \cite{gratza}, which shows that for any compactly supported Hamiltonian diffeomorphism $\varphi$ of $(\R^{2n}, \om_o)$, there exists two triangulations $K_1$ and $K_2$ of $\R^{2n}$ and a piecewise linear symplectic diffeomorphism $\psi : K_1 \to K_2$ that $C^0$-approximates $\varphi$ (of course, this result does not permit to replace a smooth atlas by a PL one). Panov in \cite{panov} studied polyhedral K\"ahler manifold but from a different viewpoint. More recently, Jauberteau, Rollin and Tapie have proven in \cite{rollin} that an isotropic immersed torus $\mathbb T^k$ in $(\R^{2n}, \om_o)$ can be approximated by isotropic piecewise linear maps $\mathbb T^k \to \R^{2n}$. Rollin has proven a refinement of that result in \cite{rollin2020polyhedral}. The same author has also developed an approach to construct PL symplectic homeomorphisms from the $4$-torus to itself in \cite{rollin2021polyhedral}. \\


In the first part of the paper we prove the following result about triangulation of smooth volume forms. 

\begin{thm}\label{Theorem} Given a volume form $\Omega$ on a manifold $M$ and a Whitehead triangulation $h : |K| \to M$, there exists a refinement $\overline{K}$ of $K$ and a piecewise differentiable map $\ol{h} : |\overline{K}| \to M$ such that $\ol{h}^*\Omega$ is constant on each simplex of $\overline{K}$. Moreover, if $g : |K| \to M$ and $h : |L| \to M$ are two triangulations of the same volume form, there exists a piecewise linear homeomorphism $\psi : |K| \to |L|$ such that $g = h \circ \psi$. 
\end{thm}

Observe, first of all, that, given a smooth triangulation $h : |K| \to M$ of $M$, there is a natural candidate for the role of the triangulating volume form, namely, the unique piecewise constant $(\dim M)$-form on $K$ attributing to each simplex of $K$ the same orientation and volume as $h^*\Omega$. Denote it by $\Omega_1$ and denote the form $h^*\Omega$ by $\Omega_0$. If there exists a piecewise smooth homeomorphism $\varphi : \ol{K} \to \ol{K}$, defined on a refinement $\ol{K}$ of $K$, that pulls back $\Omega_0$ to $\Omega_1$, then $\ol{h} = h \circ \varphi :  \ol{K} \to M$ triangulates~$\Omega$.
 
\smallskip

Now consider the vector field defined by the equation $\iota(X_t) \Omega_t = \alpha$, where $\Omega_t = \Omega_0 + t (\Omega_1 - \Omega_0)$ and $\alpha$ is a Whitney form such that $d\alpha = \Omega_0 - \Omega_1$. It is multi-valued along the skeleton of $K$ and its flow is therefore not obviously well-defined. It might therefore be wise to explore another avenue. The one adopted here is to arrange for the form $\alpha$ to vanish along the skeleton of some subdivision $\ol{K}$ of $K$, so that the flow of $X_t$ is stationary there. It preserves therefore any simplex and is thus well-defined on $\ol{K}$. 

\smallskip

Observe that a piecewise differentiable homeomorphism $\varphi : |K| \to |K|$ that restricts to the identity along the skeleton of $K$ satisfies $\varphi_{*_x} = \rm Id$ at all points $x$ in the codimension $2$ skeleton. So that in order to be able to find such a primitive $\alpha$, it is necessary to modify the initial triangulation in order to achieve that $\Omega_0$ and $\Omega_1$ coincide along the codimension $2$ skeleton, and this is the delicate point of the proof. We end up proving in fact more~: for a suitable choice of $h$, the forms $\Omega_0$ and $\Omega_1$ agree on a neighborhood of the entire skeleton of some subdivision $\ol{K}$ of $K$. 

\smallskip

The next question is that of uniqueness~: given a closed manifold $M$ endowed with a volume form $\Om$ and two triangulations $g : |K| \to M$ and $h : |L| \to M$, does there necessarily exist a PL equivalence $\psi : |K| \to |L|$ such that $h \circ \psi = g$~? The answer is yes.  In fact any two piecewise constant volume forms on a same PL manifold that have the same total volume are PL equivalent. \\

In the second part of the paper, we prove that any smooth symplectic $(M, \om)$ manifold admits a triangulation $h : |K| \to M$ that is in general position with respect to $\om$, that is for which $\om$ has maximal rank along all the simplices of $K$. In the last part of the paper, we used this condition on $K$ to construct approximations of $\om$ but, as observed by the referee, general position is superfluous. We think the jiggling lemma is nevertheless interesting in itself and deserves to be written here.

\begin{lemma}\label{sympjig}(Symplectic Jiggling Lemma) Let $(M, \omega)$ denote a smooth symplectic manifold. Consider a Whitehead triangulation $h : |K| \to M$ of $M$. Then for a crystalline subdivision $\mathcal{S}_\ell(K)$ of $K$ of sufficiently large order $\ell$, there exist arbitrary small perturbations of $h : \mathcal{S}_\ell(K) \to M$ that are in general position with respect to~$\omega$. 
\end{lemma}

The proof is largely inspired by Thurston's jiggling lemma, which asserts that if $M$ is a smooth manifold equipped with a $C^0$ distribution $\tau$, there exist triangulations of $M$ which are in general position with respect to $\tau$ \cite{thurstonjig}. Despite the intuitive evidence of the later result, the proof is, in our opinion, quite subtle and remarkably elegant. \\

In the third and last part of the paper, we prove that any smooth symplectic manifold $(M, \om)$ can be $C^1$-approximated by PL ones. Indeed, given a triangulation $h : K \to M$, there is a natural procedure to construct a piecewise constant symplectic form ${\mathcal C}_K(\omega)$ on $K$. Moreover, the latter can be made arbitrarily $C^1$-close to $\om$ by subdividing sufficiently finely the initial triangulation. This leads to the following result.

\begin{thm} Let $(M, \om)$ be a closed symplectic manifold, and $h : |K| \to M$, a smooth triangulation. Given $\varepsilon > 0$, there exists an $\ell \geq 0$ such that ${\mathcal C}_{{\mathcal S}^\ell(K)}(\om)$ is $\varepsilon$-close to $\om$ in the $C^0$-norm.
\end{thm}

This provides in particular loads of examples of PL symplectic manifolds and says also that it is sufficient for a PL manifold to admit a smoothing that supports a smooth symplectic form to support a PL one. It might also be that there exists smooth manifolds which admit PL symplectic structures but no smooth ones. This cannot be the case for closed manifolds with no degree $2$ cohomology class whose top exterior power is not exact, as the standard argument to prove that such do not support any symplectic structure is valid in the PL setting as well. (Thus even-dimensional spheres do not support PL symplectic structures, except in dimension $2$).

\smallskip

Coming back to our initial project, one could hope that some of these approximating piecewise constant forms ${\mathcal C}_{{\mathcal S}^\ell(K)}(\om)$ on $|K|$ are piecewise smoothly equivalent to the initial smooth one. Unfortunately this might not be so because it is not at all clear that the form we construct do satisfy Darboux. It might be necessary to invent another procedure to produce PL symplectic forms that would enforce Darboux from the start. 

\smallskip

Let us mention also that Moser's argument used to construct symplectomorphisms in various situations does not easily adapt to our setting. Indeed, as explained above in a slightly different context, the first part of the argument goes true and a vector field can be constructed, but it is multivalued along the skeleton. As a result, it is in general impossible to make sense of its flow as a family of piecewise differentiable homeomorphisms. One obvious obstruction is that a flow line may reach the skeleton in finite time, where bifurcation can occur. On the other hand we have tried to deform the initial triangulation $h : |K| \to M$ so as to make $h^*\om$ coincide with ${\mathcal C}_K(h^*\omega)$ near the skeleton, so that we can produce a Moser vector field that vanish near the skeleton, as is done for volume forms. For volume forms though, the piecewise constant form constructed initially never changes throughout the process since the deformation we construct is a reparameterization of the initial triangulation. For symplectic forms, because the initial piecewise constant form might not satisfy Darboux, we may not hope to achieve our goal by simply reparameterizing the initial triangulation and we might be forced to change it radically. We did not find a concrete way to proceed. \\

The paper is organized as follows. The first section presents the definitions and notations adopted in the text. The second one contains the proof of the existence part of \tref{Theorem}. It is itself divided into a first subsection that explains the induction procedure to construct a triangulation making $\Omega_0$ agrees with $\Omega_1$ near the skeleton and a second one that contains a technical lemma. The uniqueness part is proven in the third section and consequences of \tref{Theorem} appear in the Fourth section. The fifth section is devoted to the symplectic jiggling lemma. It is divided into three subsections. The first one discusses the statement of the theorem. It is followed by a reminder of the notion of crystalline subdivision of a simplicial complex. Then comes the proof itself. Section \ref{construction PC} explains how to PL approximate smooth symplectic structures. \\

{\bf Acknowledgments} We are particularly grateful to Yakov Eliashberg who initiated the project to investigate PL/PS symplectic and contact topology and with whom the first author has had numerous very stimulating and interesting conversations. Part of the research was carried out during a visit of the authors, as well as several visits of the first author at the Mathematics Department of Stanford University that we thank for its hospitality. We also thank Fran\c cois Laudenbach, whom we have consulted on several occasions. The research of the first author was supported by the EoS grant \emph{Symplectic Techniques in Differential Geometry}. The second author was supported by a grant of the FRS-FNRS. Lastly, the authors thank the referee for noticing that the jiggling lemma was not necessary to carry out the approximation procedure.

\section{Definitions and Notations}\label{Prelims}

The conventions, notations and definitions adopted in the text are described hereafter. We begin with those that concern PL topology; they agree with Lurie's in \cite{Lurie}. We end with those specific to symplectic PL topology.

\subsection{PL topology} 

By a $n$-simplex $\sigma$, we mean a closed affine simplex of dimension $n$ in some Euclidean space $\R^N$, or, equivalently, the convex hull of a collection of $n+1$ independent points $v_0, \ldots , v_n \in \R^N$ (\emph{independent} means that none is contained in the affine plane generated by the other ones). We write $\sigma = \langle v_0, \ldots , v_n\rangle$. More generally, given subsets $A_1, ..., A_k$ of $\R^N$, the symbol $\langle A_1, ..., A_k \rangle$ stands for the convex hull of the union of the $A_i$'s. We also denote by $L_\sigma$ the vector space parallel to the affine space spanned by $\sigma$. In the text, a simplex $\sigma$ is sometimes also thought of as the simplicial complex that consists $\sigma$ plus its faces. 

\medskip

We think of a simplicial complex $K$ as being a collection of simplices in some Euclidean space $\R^N$ that satisfies the usual properties~: a face of a simplex in $K$ belongs to $K$ and the intersection of two simplices $\sigma_1$ and $\sigma_2$ in $K$ is either empty or consists of a common face of $\sigma_1$ and $\sigma_2$. If $K$ is not a finite collection then we require, in addition, that $K$ be locally finite. For $n \in \N$, the collection of all closed $n$-simplices of $K$ is denoted by $K^{(n)}$. 

\medskip

The \emph{star} of a simplex $\sigma \in K$, denoted by $\rm st(\sigma)$ or $\rm st_K(\sigma)$, is the subcomplex of $K$ consisting of all simplices in $K$ that intersect $\sigma$ non-trivially, together with their faces. The \emph{link} of $\sigma$, denoted by $\rm lk(\sigma)$ or $\rm lk_K(\sigma)$, consists in the simplices of $\rm st(\sigma)$ that do not intersect $\sigma$. The \emph{body} of a simplicial complexe $L$, denoted by $|L|$, is the union $|L| = \cup_{\sigma \in K} \sigma$. A \emph{polyhedron} is defined to be the body of some simplicial complexe. In turn a simplicial complexe $K$ is a \emph{triangulation} of the polyhedron $|K|$. 

\medskip

A \emph{subdivision} or \emph{refinement} of a simplicial complex $K$ is a simplicial complex $L$ such that each simplex of $L$ is contained in a simplex of $K$ and each simplex of $K$ is a union of simplices of $L$. In particular their polyhedra $|K|$ and $|L|$ are identical. 

\medskip

A \emph{piecewise linear (PL) embedding} of a simplicial complex $K$ into some Euclidean space $\R^M$ is a continuous embedding of its body into $\R^M$ whose restriction to each simplex $\sigma$ is an affine map. Given two simplicial complexes $K \subset \R^M$ and $L \subset \R^N$, a \emph{PL homeomorphism} from $K$ to $L$, denoted $h : K \to L$, is a homeomorphism $h : |K| \to |L|$ whose restriction to each simplex $\sigma \in K$ is an affine map onto a simplex $h(\sigma) \in L$. If $K$ and $L$ admit subdivisions $K'$ and $L'$ and a PL homeomorphism $h : K' \to L'$, then $K$ and $L$ are said to be \emph{PL equivalent}. Similarly a homeomorphism $h : |K| \to |L|$ is said to be \emph{piecewise smooth (PS)} if each simplex of $K$ is smoothly embedded onto a simplex of $L$. A \emph{piecewise smooth equivalence} is defined accordingly. 

\medskip

A \textsl{PL manifold} of dimension $n$ is a polyhedron $P$ which is locally flat, in the sense that for some (in fact, any) triangulation $K$ of $P$, the star of any simplex $\sigma \in K$ admits a PL embedding onto a PL ball in $\R^n$. Two PL manifolds $P$ and $P'$ are said to be PL (\rp PS) equivalent if they admit PL (\rp PS) equivalent triangulations. The collection of PL manifolds with PL equivalences form a category (see \cite{Lurie}). 

\medskip

Given a simplex $\sigma$, its barycenter is denoted by $b$ or $b_\sigma$. The \emph{stellar subdivision} of a $n$-simplex $\sigma$, denoted by $\mathcal{S}\sigma$, is the simplicial complex consisting of all proper faces of $\sigma$, together with the vertex $b_\sigma$ and the simplices $<\tau, b_\sigma>$, where $\tau$ runs through the collection of proper faces of $\sigma$. The stellar subdivision of a PL manifold $K$ of dimension $n$, denoted by $\ES K$, is the union over all $\sigma \in K^{(n)}$ of the $\mathcal{S}\sigma$'s. Thus only the top-dimensional faces are subdivided. 

\medskip

Finally we adopt Misha Gromov's notation $\mathcal{O}p(A)$ or $\mathcal{O}p_X(A)$ for an unspecified open neighborhood of a subset~$A$ in a topological space $X$.

\subsection{PL symplectic topology}

A \textsl{Whitney form of degree $k$} or a \textsl{PS $k$-form} on a simplicial complex $K$ is a collection $\alpha = (\alpha_\sigma)_{\sigma \in K}$, where $\alpha_\sigma$ is a smooth $k$-form on $\sigma$, such that 
$${\alpha_{\sigma_1}}\Bigl|_{\sigma_1 \cap \sigma_2} = {\alpha_{\sigma_2}}\Bigl|_{\sigma_1 \cap \sigma_2},$$
for all $\sigma_1, \sigma_2 \in K$ (see~\cite{sullivan} Section 7). Of course, if $K'$ is a refinement of $K$, the form $\alpha$ induces a Whitney $k$-form, also denoted by $\alpha$, on $K'$. A \textsl{Whitney form} of degree $k$ on a PL manifold $P$ of dimension $n$ is a Whitney $k$-form on a simplical complex $K$ that triangulates $P$. Observe that the defining condition above is empty in case $k = n$. A Whitney $n$-form is thus simply a collection of smooth $n$-forms on the simplices of dimension $n$.

\medskip

If $\alpha$ is a smooth $k$-form on a smooth manifold $M$ of dimension $n$ endowed with a Whitehead triangulation $h : |K| \to M$, then the pullback of $\alpha$ through $h$ is defined to be the Whitney $k$-form $h^*\alpha = (h|_{\sigma}^*\alpha)_{\sigma \in K}$.

\medskip

The \textsl{differential} of a Whitney $k$-form $\omega$ is the Whitney $(k+1)$-form defined by $d\omega = (d\omega_\sigma)_{\sigma \in K^{(n)}}.$ A Whitney $k$-form $\omega$ is called \textsl{exact} if $\omega = d\alpha$ for some Whitney $(k-1)$-form $\alpha$. We can define a cohomology associated to piecewise smooth forms on $K$. It is isomorphic to the simplicial cohomology of $K$ (cf. \cite{sullivan}). 

\medskip

Two Whitney forms $\alpha_1$ and $\alpha_2$ on a simplicial complex $K$ are said to be PS (respectively PL) equivalent when there exists two subdivisions $K'$ and $K''$ of $K$ and a PS (respectively PL) homeomorphism $h : K' \to K''$ such that $h^*\alpha_1=\alpha_2$. The notion of equivalence between $k$-forms on a PL manifold is defined accordingly.

\begin{df} Let $P$ denote a PL manifold of dimension $n$.

$\bullet$ A \textsl{piecewise constant (PC) $k$-form} on $P$ is a Whitney $k$-form $\omega = (\omega_\sigma)_{\sigma \in K}$ on some triangulation $K$ of $P$ such that $\omega_\sigma$ is a constant form for each $\sigma \in K$. \par

$\bullet$ For $k = n$, a Whitney $n$-form on $P$ is a  \textsl{volume form} if, for each $\sigma \in K^{(m)}$, the form $\om_\sigma$ does not vanish and if the $\om_\sigma$'s induce an orientation of $P$.\par

$\bullet$ For $k = 2$ and $n = 2m$, a Whitney form on $P$ is said to be symplectic if $\omega_\sigma$ is closed and non-degenerate for each $\sigma \in K^{(n)}$ and $\om^m = (\om^m_\sigma)_{\sigma \in K}$ is a volume form on $P$.

$\bullet$ If $P$ is oriented and endowed with a Whitney $n$-form $\Omega = (\Om_\sigma)_{\sigma \in K^{(n)}}$, the integral of $\Omega$ along $P$ is denoted by $\int_P\Omega$ and defined to be the sum
$$\sum_{\sigma \in K^{(n)}} \int_\sigma \Omega|_\sigma.$$ 
The \textsl{volume cocycle of $\Omega$} is the map $V_\Om : \sigma \in K^{(n)} \to \int_\sigma \Omega|_\sigma$.\\

\end{df}
Observe that the condition to be constant for a $k$-form defined on a simplex $\sigma=\langle v_0, \ldots, v_\ell \rangle$ is well-defined since $\Lambda^kT^*\sigma$ has a canonical class of trivializations for which the collections of constant forms are all the same.

\begin{df}\label{PCvol} Given a closed smooth form $\Omega$ on a smooth manifold $M$, a \emph{triangulation of $\Omega$} is a Whitehead triangulation $h : |K| \to M$ (that is $h|_\sigma$ is a smooth embedding for all $\sigma \in K$) such that $h^*\Omega$ is piecewise constant.
\end{df}

\section{Triangulations of volume forms}

Consider an oriented smooth manifold $M$ of dimension $n \geq 2$, equipped with a positive volume form $\Omega$. If $h : |K| \to M$ is a smooth triangulation of $M$, consider the Whitney $n$-form $\Omega_0 = h^*\Omega$ and some piecewise constant $n$-form $\Omega_1$ on $K$. As explained in the introduction, we would like to select $\Omega_1$ and $h$ so that a primitive of $\Omega_0 - \Omega_1$ may be chosen that vanish along the skeleton. This ensures that the flow of the time-dependent vector field $X_t$ defined by $i(X_t)\Omega_t = \alpha$, for $\Omega_t = \Omega_0 + t (\Omega_1 - \Omega_1)$, is stationary along the skeleton and induces therefore a well-defined PS isotopy $(\varphi_t)$ of $M$ which will achieve $\varphi_1^*\Omega_0 = \Omega_1$. (Observe that $\alpha|_{\partial \sigma} = 0$ is not sufficient, as the latter condition only implies that $X_t$ is tangent to $\partial \sigma$.) Let us investigate now the precise conditions to be satisfied by $\Omega_1$ and $h$.

\begin{enumerate}

\item In order to be able to apply Moser's argument to volume forms $\Omega_0$ and $\Omega_1$ on an oriented $n$-simplex $\sigma$, it is necessary for them to be cohomologous, that is, to have the same total volume~:
$\int_{\sigma} \Omega_1 = \int_{\sigma} \Omega_0.$ In other words, $\Omega_0$ and $\Omega_1$ must have identical volume cocycle.

\item Suppose that there exists a diffeomorphism $\varphi$ of $\sigma$ which coincides with the identity on $\partial \sigma$ and such that $\varphi^*({\Omega_1}|_\sigma) = {\Omega_0}|_\sigma$. Then $\varphi_{*_x} = {\rm Id}$ at any point $x$ of the codimension-two skeleton since, at such a point, the tangent space to $\sigma$ is generated by the tangent spaces to the faces passing through $x$. Therefore, a second necessary condition is that the forms $\Omega_0$ and $\Omega_1$ coincide at points of $\sigma^{(n-2)}$.

\end{enumerate}

Given some PS volume form $\Omega_0 = h^*\Omega$, a piecewise constant form $\Omega_1$ that satisfies the first condition is easily constructed~: for each $n$-simplex $\sigma$ endowed with the orientation induced by $\Omega_0$, let $\Omega_1^\sigma$ denote the constant $n$-form on $\sigma$ whose total volume is $\int_\sigma \Omega_0$. These forms gather into a piecewise constant volume form $\Omega_1 = {\mathcal C}_K(\Omega_0) = (\Omega_1^\sigma)_{\sigma \in K^{(n)}}$ on $K$ that has the same volume cocycle as $\Omega_0$. \\

The rest of this chapter consists in constructing a triangulation $h' : |K'| \to M$, defined on some refinement $K'$ of $K$ for which $h'^*\Omega$ and $\Omega_1$ agree near the skeleton of $K'$. 
This is done by induction on the dimension of the simplices and allows us to construct a primitive of $\Omega_0 - \Omega_1$ that vanishes in a neighborhood of the skeleton. We may then use Moser's argument, which we briefly recall now, inside each $n$-simplex .

\begin{prop}\label{mo} [Relative Moser]
Let $\Omega_0$ and $\Omega_1$ be smooth volume forms on an oriented $n$-simplex $\sigma$ that agree near its boundary and have the same volume. Then there exists a diffeomorphism $\varphi$ of $\sigma$ such that $\varphi^*\Omega_1 = \Omega_0$ and $\varphi|_{\mathcal{O}p(\partial \sigma)} = \mathrm{Id}$.
\end{prop}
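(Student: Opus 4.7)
\emph{The plan is to run the classical Moser argument, being careful that the primitive can be chosen to vanish near $\partial\sigma$ so that the resulting isotopy is the identity there.}

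First, I would form the straight-line homotopy $\Omega_t = (1-t)\Omega_0 + t\Omega_1$, for $t \in [0,1]$. Since $\Omega_0$ and $\Omega_1$ coincide on $\mathcal{O}p(\partial\sigma)$, they induce the same orientation on that neighborhood, and hence on all of $\sigma$ (as each is a global volume form for the chosen orientation). Consequently, at every point of $\sigma$ and every $t$, $\Omega_t$ is a positive multiple of the volume element, so $\{\Omega_t\}$ is a smooth family of volume forms. Moreover $\Omega_t$ is independent of $t$ on $\mathcal{O}p(\partial\sigma)$, so $\Omega_1-\Omega_0$ is smooth on $\sigma$ and vanishes on a neighborhood of $\partial\sigma$.

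Next, I would produce a primitive $\alpha$ of $\Omega_1-\Omega_0$ that also vanishes near $\partial\sigma$. Identifying $\mathrm{Int}(\sigma) \cong \R^n$, the form $\Omega_1-\Omega_0$ has compact support in $\mathrm{Int}(\sigma)$, and its total integral over $\sigma$ equals zero by the hypothesis $\int_\sigma \Omega_0 = \int_\sigma \Omega_1$. Since $H^n_c(\R^n) \cong \R$ via integration, there exists a compactly supported smooth $(n-1)$-form $\alpha$ on $\mathrm{Int}(\sigma)$ with $d\alpha = \Omega_1 - \Omega_0$. Extending by zero gives a smooth $(n-1)$-form on $\sigma$ vanishing on some $\mathcal{O}p(\partial\sigma)$.

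Finally, I would define the time-dependent vector field $X_t$ on $\sigma$ by $\iota(X_t)\Omega_t = -\alpha$; this is well-defined because each $\Omega_t$ is nondegenerate. Since $\alpha \equiv 0$ on $\mathcal{O}p(\partial\sigma)$, the same is true of $X_t$, so $X_t$ is compactly supported in $\mathrm{Int}(\sigma)$; its flow $\varphi_t$ is therefore globally defined on $\sigma$ for all $t \in [0,1]$ and equals the identity on $\mathcal{O}p(\partial\sigma)$. The standard computation, using $d\Omega_t = 0$,
$$\frac{d}{dt}(\varphi_t^*\Omega_t) = \varphi_t^*\bigl(\mathcal{L}_{X_t}\Omega_t + (\Omega_1-\Omega_0)\bigr) = \varphi_t^*\bigl(d\iota(X_t)\Omega_t + d\alpha\bigr) = 0,$$
then yields $\varphi_1^*\Omega_1 = \varphi_0^*\Omega_0 = \Omega_0$, so $\varphi := \varphi_1$ has the required properties.

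\emph{The only nontrivial point} is the existence of the primitive $\alpha$ vanishing near $\partial\sigma$: the naive Poincaré-lemma primitive need not have compact support. This is resolved by the compactly supported de Rham lemma $H^n_c(\R^n) \cong \R$, and the equal-volume hypothesis is precisely what makes $\Omega_1-\Omega_0$ represent the zero class there.
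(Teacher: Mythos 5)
Your proof is correct and follows essentially the same route as the paper: observe that $\Omega_1-\Omega_0$ is compactly supported in $\mathrm{Int}(\sigma)$ with zero integral, hence exact in $H^n_c$, take a compactly supported primitive $\alpha$, and run the Moser flow of $\iota(X_t)\Omega_t=-\alpha$, which is stationary near $\partial\sigma$. The additional checks you include (that $\Omega_t$ stays nondegenerate, and the explicit $\frac{d}{dt}\varphi_t^*\Omega_t=0$ computation) are correct and only make the argument more complete than the paper's version.
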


\begin{proof}
Since the $n$-form $\Omega_0 - \Omega_1$ is compactly supported in $\rm Int(\sigma)$ and has volume zero, it represents the trivial cohomology class in $H^n_c({\rm Int} (\sigma))$. It admits therefore a compactly supported primitive $\alpha$. The time-dependent vector field $X_t$ defined by  
$$
\iota(X_t)(\Omega_0 + t d\alpha) = \alpha
$$
has compact support. Its flow provides us with an isotopy $\varphi_t$ that is stationary near $\partial \sigma$ and satisfies $\varphi_t^*\Omega_t = \Omega_0$ for each $t \in [0,1]$.
\end{proof}

\begin{rem}
If we only assume that $\Omega_1$ and $\Omega_0$ coincide at points of the codimension two skeleton, it is also possible to construct a primitive $\alpha$ of $\Omega_1-\Omega_0$ which vanishes on $\partial \sigma$. This result is shown in \cite{bruveris}. More precisely, the authors prove that if $\Omega_0$ and $\Omega_1$ have same total volume on a $n$-simplex $\sigma$ and agree on $\sigma^{(n-2)}$ then a primitive that vanishes along $\partial \sigma$ can be constructed. Of course this hypothesis has no reason to be satisfied here and the main body of our argument consists in showing that $\Omega_0$ is PS equivalent to a volume form $\Omega'_0$ that coincides with $\Omega_1$ along the codimension-two skeleton. We prove more~: these forms can be made to agree on a neighborhood of the skeleton. Their difference admits therefore a primitive vanishing there. 
\end{rem}

\subsection{Agreement near the skeleton}\label{mod2}

We are given a smooth volume form $\Omega$ on a manifold $M$ endowed with a triangulation $h : |K| \to M$. We consider the PS form $\Omega_0 = h^*\Omega$ and the PC form $\Omega_1 = {\mathcal C}_K(\Omega_0)$ on $K$. The purpose of this section is to replace the triangulation $h : |K| \to M$ by $\widetilde{h}$ of the type $\psi \circ h \circ \varphi$, where $\varphi$ is a PS homeomorphism of some refinement ${K'}$ of $K$ and $\psi$ is a diffeomorphism of $M$, such that $\widetilde{h}^*\Omega$ coincides with $\Omega_1$ near the skeleton of the new simplicial complex ${K'}$. We first handle the $0$-skeleton $K^{(0)}$ and then treat successively the higher-dimensional skeleta by means of parametric versions of the $0$-skeleta case. Obtaining such a parametric versions is the subtle part of the argument.

\subsubsection{First step}

Fix a $n$-simplex $\sigma$. The first step is to deform $\Omega_0$ via a piecewise smooth homeomorphism of $\sigma$ to make it equal to $\Omega_1$ near $|\sigma^{(0)}|$. The idea is to consider the stellar subdivision $\mathcal{S}\sigma$ of $\sigma$ and to construct a piecewise smooth homeomorphism of $\mathcal{S}\sigma$ which fixes $\partial \sigma$ and moves the points in a neighborhood of $v \in \sigma^{(0)}$ in the directions of the edge $<v, b>$. As the property that the volumes of $\Omega_0$ and $\Omega_1$ agree on the $n$-simplices has no reason to be true on $\mathcal{S}\sigma$, we first attend to that and, at the same time, take care of the new vertex~$b_\sigma$. 

\begin{lemma}\label{subd}
Let $\Omega_0$ and $\Omega_1$ be smooth volume forms on a linear simplex $\sigma$ with identical total volume and let $\mathcal{S}\sigma$ be the stellar subdivision of $\sigma$. There exists a smooth volume form $\overline\Omega$ on $\sigma$ and a diffeomorphism $\varphi$ of $\sigma$ such that
\begin{itemize}
\item $\varphi^*\Omega_0 = \overline\Omega$ and $\varphi$ fixes $\mathcal{O}p(\partial \sigma)$,
\item $\int_{\tau} \overline\Omega = \int_{\tau} \Omega_1$ for all $\tau \in \mathcal{S}\sigma^{(n)}$,
\item $\overline\Omega = \Omega_1$ near $b_\sigma$.
\end{itemize}
\end{lemma}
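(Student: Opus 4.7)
The plan is to first construct by hand a candidate target volume form $\overline\Omega$ on $\sigma$ enjoying the three properties required by the lemma, and then invoke \pref{mo} to produce a diffeomorphism $\varphi$, fixing $\mathcal{O}p(\partial\sigma)$, that conjugates $\Omega_0$ to $\overline\Omega$.

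For the construction of $\overline\Omega$, I would first interpolate between $\Omega_0$ and $\Omega_1$: pick a smooth cut-off $\rho : \sigma \to [0,1]$ equal to $1$ on a small neighborhood $U$ of $b_\sigma$ and vanishing outside a slightly larger neighborhood $U'$ whose closure sits well inside $\Int(\sigma)$. Because $\Omega_0$ and $\Omega_1$ both induce the orientation of $\sigma$ (their total volumes are positive and equal), the convex combination $\Omega' := \rho\,\Omega_1 + (1-\rho)\,\Omega_0$ is already a smooth volume form that coincides with $\Omega_0$ on $\sigma \setminus U'$ and with $\Omega_1$ on $U$. The volumes $c_i := \int_{\tau_i}\Omega'$ of the top-dimensional simplices of $\mathcal{S}\sigma$ will generally differ from the prescribed targets $w_i := \int_{\tau_i}\Omega_1$. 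To repair this $\tau_i$ by $\tau_i$, choose for each $i$ an open set $B_i \subset \Int(\tau_i) \setminus \bigl(\overline{U'} \cup \mathcal{O}p(\partial\sigma)\bigr)$ whose $\Omega'$-volume is close enough to $c_i$ to ensure $\int_{B_i}\Omega' + (w_i - c_i) > 0$, and pick a positive smooth function $g_i$ on $\sigma$, equal to $1$ outside a compact subset of $B_i$, with $\int_{B_i}(g_i - 1)\,\Omega' = w_i - c_i$. Setting $\overline\Omega := g_i\,\Omega'$ on each $B_i$ and $\overline\Omega := \Omega'$ elsewhere then produces a smooth volume form with the three desired properties: agreement with $\Omega_0$ near $\partial\sigma$, agreement with $\Omega_1$ near $b_\sigma$, and $\int_{\tau_i}\overline\Omega = w_i$.

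The forms $\Omega_0$ and $\overline\Omega$ now agree on a neighborhood of $\partial\sigma$ by construction, and summing over $i$ yields $\int_\sigma\overline\Omega = \sum_i w_i = \int_\sigma\Omega_1 = \int_\sigma\Omega_0$. Thus the hypotheses of \pref{mo} are met, and applying it with the roles of the two forms exchanged produces a diffeomorphism $\varphi$ of $\sigma$, identical to the identity on $\mathcal{O}p(\partial\sigma)$, satisfying $\varphi^*\Omega_0 = \overline\Omega$. The pair $(\overline\Omega,\varphi)$ then fulfills all three conclusions of the lemma.

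The argument cleanly splits into a local volume-redistribution problem (handled by the scaling $g_i$ on disjoint sets $B_i$) and a global equivalence problem (handled by Moser). The only mildly delicate point is keeping $g_i$ strictly positive while achieving the prescribed integral — automatic when $w_i \geq c_i$, and achievable when $w_i < c_i$ by taking $B_i$ of sufficiently large $\Omega'$-measure so that the needed decrease of mass is spread thinly enough not to drive $g_i$ below zero. I do not expect any genuine conceptual obstacle here; the main insight is recognising that one can decouple the barycenter/boundary behavior (built in from the start into $\Omega'$) from the volume-cocycle correction (handled by local scalings), after which Moser's theorem completes the proof.
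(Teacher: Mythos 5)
Your proof is correct and takes essentially the same route as the paper's: construct $\overline\Omega$ agreeing with $\Omega_0$ near $\partial\sigma$ and with $\Omega_1$ near $b_\sigma$, with the prescribed volume cocycle on $\mathcal{S}\sigma$, and then apply the relative Moser proposition. The only difference is that you make explicit, via the cut-off interpolation and the local rescalings $g_i$, the conformal factor $f$ whose existence the paper asserts in a single line.
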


\begin{proof}
Define $\overline\Omega=f\Omega_0$, where $f$ is a smooth positive function on $\sigma$ which is identically equal to one near $\partial \sigma$, and such that $f\Omega_0$ and $\Omega_1$ have the same volume cocycle on $\mathcal{S}\sigma$ and agree near the new vertex $b_\sigma$. Now observe that $\overline\Omega$ and $\Omega_0$ satisfy the assumptions of Proposition~\ref{mo}, so that there exists a diffeomorphism $\varphi$ of $\sigma$ which coincides with the identity near $\partial \sigma$ and for which $\varphi^*\Omega_0 = \overline\Omega$.
\end{proof}

The next step is a deformation of $\sigma$ which makes $\overline\Omega$ equal to $\Omega_1$ near the vertices of $\sigma$. Notice that the proof of the following lemma uses a technical result which is shown later, in Section \ref{interpolation}.

\begin{lemma}\label{vert}
Consider smooth volume forms $\overline\Omega$ and $\Omega_1$ on a linear simplex $\sigma$ inducing the same orientation.
Then there exists a PS homeomorphism $\rho$ of $\mathcal{S}{\sigma}$ such that
\begin{itemize}
\item $\rho$ is the identity on $\sigma\backslash \mathcal{O}p(|\sigma^{(0)}|)$,
\item $\rho$ preserves $|\mathcal{S}\sigma^{(n-1)}|$ and fixes $\partial \sigma$,
\item $\rho^*\overline\Omega = \Omega_1$ on $\mathcal{O}p(|\sigma^{(0)}|)$,
\item If $\overline\Omega$ and $\Omega_1$ coincide near a vertex $v$, then $\rho$ is the identity near $v$.
\item The forms $\overline\Omega$ and $\rho^*\overline\Omega$ have identical volume cocyles on $\mathcal{S}{\sigma}$.
\end{itemize}
If, in addition, $\overline\Omega$ and $\Omega_1$ depend smoothly on a parameter $\lambda$ running in some smooth manifold $\Lambda$, then so does $\rho$. 
\end{lemma}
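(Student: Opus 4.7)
The approach is to construct $\rho$ near each vertex $v$ of $\sigma$ separately, since $\rho$ must be the identity off a neighbourhood of $|\sigma^{(0)}|$. Fix $v$ and write $\xi = b_\sigma - v$. I would use a displacement ansatz
\[
\rho(p) = p + \phi(p)\,\xi,
\]
with $\phi\colon\sigma\to\R_{\geq 0}$ supported in a small neighbourhood $U_v$ of $v$. The geometric point is that a displacement of a point $p \in \sigma$ by $\phi(p)\,\xi$ subtracts $\phi(p)$ from the barycentric coefficient of $v$ and adds it to that of $b_\sigma$, leaving the others unchanged. Such a $\rho$ therefore preserves every simplex of $\mathcal{S}\sigma$ that contains both $v$ and $b_\sigma$ — and in particular the entire $(n-1)$-skeleton of $\mathcal{S}\sigma$ near $v$. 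The remaining geometric requirement, $\rho|_{\partial\sigma}=\mathrm{Id}$, translates into $\phi|_{\partial\sigma\cap U_v}=0$.

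Choose affine coordinates $(t,y)$ on $U_v$ centred at $v$ with $\xi=\partial_t$ and $y$ in a transverse hyperplane, and write $\overline\Omega=\overline g(t,y)\,dt\wedge dy$, $\Omega_1=g_1\,dt\wedge dy$ with $g_1$ constant. The pullback condition $\rho^*\overline\Omega=\Omega_1$ becomes the parametric ODE
\[
\overline g\bigl(t+\phi(t,y),y\bigr)\,\bigl(1+\partial_t\phi(t,y)\bigr)=g_1,
\]
one in $t$ for each fixed $y$. This equation is explicitly integrable: specifying $\phi=0$ at one endpoint of each $\xi$-characteristic produces $\phi$ by a single quadrature. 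The genuine difficulty is then to assemble a $\phi$ on $U_v$ which (i) solves this ODE on a neighbourhood of $v$, (ii) vanishes on $\partial\sigma\cap U_v$, (iii) joins the identity smoothly off $U_v$ so that $\rho$ is a PS homeomorphism of $\mathcal{S}\sigma$, and (iv) reduces to $\phi\equiv 0$ when $\overline\Omega$ and $\Omega_1$ already coincide near $v$. These four requirements — together with smooth dependence on the external parameter $\lambda\in\Lambda$ — are precisely what the technical interpolation lemma of \sref{interpolation} is designed to deliver.

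Granting that lemma, the rest is organisational: carry out the local construction on mutually disjoint vertex neighbourhoods and extend by the identity to obtain $\rho$ on all of $\sigma$. Preservation of $|\mathcal{S}\sigma^{(n-1)}|$ and the identity along $\partial\sigma$ follow from the ansatz, and the volume-cocycle condition is automatic because $\rho$ maps each $n$-simplex of $\mathcal{S}\sigma$ to itself and is orientation-preserving, so $\int_\tau\rho^*\overline\Omega=\int_\tau\overline\Omega$ by the change-of-variables formula. The parametric statement follows from the parametric version of the interpolation lemma, since every other ingredient — the choice of $\xi$, the coordinates, the ansatz, the ODE — depends smoothly on $\lambda$. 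The whole difficulty of the proof is therefore concentrated in the interpolation lemma itself, and in particular in reconciling the vanishing of $\phi$ on $\partial\sigma\cap U_v$ with the pullback ODE across the several $n$-subsimplices of $\mathcal{S}\sigma$ meeting at $v$.
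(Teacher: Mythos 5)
Your proposal follows essentially the same route as the paper: displace points near each vertex $v$ along the direction $b_\sigma - v$, determine the displacement by a quadrature along the fibres of the projection parallel to that edge so that the pullback condition holds (your ODE is the paper's explicit integral formula, up to inverting the map), observe that such a displacement preserves every simplex of $\mathcal{S}\sigma$ through $v$ (whence the skeleton-preservation and volume-cocycle claims), and delegate the cutoff, the gluing to the identity, and the parametric dependence to the interpolation lemma of \sref{interpolation}. This matches the paper's proof, so no further comment is needed.
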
 

\begin{proof}
Fix a vertex $v \in \sigma^{(0)}$ and denote by $b$ the barycenter of $\sigma$. Let $\partial_v \sigma$ denote the boundary of $\sigma$ minus the interior of the $(n-1)$-face opposite to $v$, and let $\pi$ be the projection of $\sigma$ onto $\partial_v \sigma$, parallel to the edge $<b,v>$. Observe that any point $x$ in $\sigma$ can be written as 
$$
x = \pi(x) +t(x)(b-v)
$$
for a certain $t(x) \in \R^+$. 
Now let $F$ be the smooth positive function defined by the relation $\overline\Omega = F\Omega_1$ and define a piecewise smooth map $\varphi_F$ on $\sigma$ (smooth on each $\tau \in \ES \sigma$), with values in the affine plane spanned by $\sigma$, by
$$
\varphi_F(x)=\pi(x)+\left(\int_0^{t(x)} F(\pi(x)+s(b-v))ds \right)(b-v).
$$
Clearly, $\varphi_F$ coincides with the identity on $\partial_v \sigma$. If $\tau$ is a $(n-1)$-face of $\sigma$ which contains $v$, we can choose linear coordinates $x_1, \ldots, x_{n-1}$ in $\tau$ and complete them with $t$ to obtain coordinates on $\sigma \cap \pi^{-1}(\tau)$. Then
$$
\varphi_F(x_1,\ldots,x_{n-1},t) =\left(x_1, \ldots, x_{n-1}, \int_0^{t} F(x_1, \ldots, x_{n-1}, s) ds \right)
$$
on $\sigma \cap \pi^{-1}(\tau)$. Whence $|\mathrm{Jac}(\varphi_F)|=F$ and $\varphi_F^*\Omega_1 = \overline\Omega$, on all of $\sigma$. \\

The idea is now to modify $\varphi_F$ away from $v$ in such a way that it becomes the identity out of an arbitrary small neighborhood of $v$. In the affine plane spanned by $\sigma$, consider a ball $B$ centered at $v$ that does not contain $b$. Choose also a non-negative function $\chi$ on $\sigma$ that vanishes near $v$, equals $1$ outside $\frac{1}{2}B$ (the ball centered at $v$ of half the radius) and that is non-decreasing along the radial directions. Define $\overline{F}=(1-\chi)F+\chi$ and consider the map $\varphi_{\overline{F}}$. Since $\overline{F}$ equals $1$ out of $\frac12 B$, any point $x$ such that $\pi(x) \notin \frac12B$ satisfies $\varphi_{\overline{F}}(x)=x$. Altogether $\varphi_{\overline{F}}$ satisfies the assumptions of Lemma \ref{inter2} below, and there exists thus a piecewise smooth homeomorphism $\overline\varphi_{\overline{F}}$ of $\ES\sigma$ which coincides with $\varphi_{\overline{F}}$ near $v$ and with the identity near $\partial B$. The same procedure can be repeated near each vertex and this completes the proof.

Moreover, the procedure just described, being explicite and relying on Lemma \ref{inter2}, provides us with a smooth dependence of $\rho$ on a parameter $\lambda$ whenever the initial forms $\overline\Omega$ and $\Omega_1$ depend smoothly on $\lambda$.
\end{proof} 

Thus, given a smooth volume form $\Omega$ on a manifold $M$ endowed with a triangulation $h : |K| \to M$ and the associated forms $\Omega_0 = h^*\Omega$ and $\Omega_1 = {\mathcal C}_K(\Omega_0)$, Lemma \ref{subd} and Lemma \ref{vert} provide, for each $n$-simplex $\sigma$, piecewise smooth homeomorphisms $\varphi_\sigma$ and $\rho_\sigma$ of $\ES \sigma$ which, among may other properties, are the identity on $\partial \sigma$. When $\sigma$ runs through $K^{(n)}$, the maps $\varphi_\sigma$ and $\rho_\sigma$ form thus global piecewise smooth homeomorphisms of $\ES K$, denoted respectively by $\varphi$ and $\rho$. The associated triangulation $\tilde{h}=h \circ \varphi \circ \rho : |\mathcal{S}K| \to M$ satisfies the relation
$$
\tilde{h}^*\Omega = \rho^*(\varphi^*(h^*\Omega)) = \rho^*(\varphi^*\Omega_0) = \rho^*\overline{\Omega}.
$$
The latter form coincides with the piecewise constant form $\Omega_1$ near the vertices of $\mathcal{S}K$ and admits the same volumes cocyle as $\Omega_1$ on $\mathcal{S}K$. We have thus proven the following result.

\begin{prop}\label{conclusion1}
Given a smooth volume form $\Omega$ on a manifold $M$, there exists a triangulation $h : |K| \to M$ 
such that $h^*\Omega$ and $\Omega_1 =  {\mathcal C}_K(\Omega_0)$ have the same volume cocycle on $K$ and coincide near the vertices of $K$. 
\end{prop}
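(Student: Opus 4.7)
The plan is to take an arbitrary Whitehead triangulation of $M$, pass to its stellar subdivision, and then post-compose with a piecewise smooth self-homeomorphism obtained by assembling the local modifications supplied by Lemma~\ref{subd} and Lemma~\ref{vert} on each top-dimensional simplex of the original triangulation. The essential point for gluing is that both lemmas produce modifications equal to the identity on a neighborhood of $\partial\sigma$, which is exactly what one needs to combine them across common faces.

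Concretely, I would start with any Whitehead triangulation $h_0 : |K_0| \to M$, set $\Omega_0 := h_0^*\Omega$ and $\widehat{\Omega}_1 := \mathcal{C}_{K_0}(\Omega_0)$, and take $K := \ES K_0$. For each $\sigma \in K_0^{(n)}$, Lemma~\ref{subd} applied to $(\Omega_0|_\sigma, \widehat{\Omega}_1|_\sigma)$, whose total volumes agree by construction of $\widehat{\Omega}_1$, yields a diffeomorphism $\varphi_\sigma$ of $\sigma$ equal to the identity on $\mathcal{O}p(\partial\sigma)$, together with a smooth form $\overline{\Omega}_\sigma := \varphi_\sigma^*(\Omega_0|_\sigma)$ whose volume cocycle on $\ES\sigma$ matches that of $\widehat{\Omega}_1$ and which coincides with $\widehat{\Omega}_1$ near the barycenter $b_\sigma$. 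The $\varphi_\sigma$'s assemble into a PS self-homeomorphism $\varphi$ of $K$, and the $\overline{\Omega}_\sigma$'s into a PS volume form $\overline{\Omega}$ on $K$ satisfying $\varphi^*\Omega_0 = \overline{\Omega}$.

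Next, applying Lemma~\ref{vert} to $(\overline{\Omega}_\sigma, \widehat{\Omega}_1|_\sigma)$ on each $\sigma \in K_0^{(n)}$ produces a PS self-homeomorphism $\rho_\sigma$ of $\ES\sigma$ that fixes $\partial\sigma$, preserves $|\ES\sigma^{(n-1)}|$, satisfies $\rho_\sigma^*\overline{\Omega}_\sigma = \widehat{\Omega}_1$ on $\mathcal{O}p(\sigma^{(0)})$, and has the same volume cocycle on $\ES\sigma$ as $\overline{\Omega}_\sigma$. Since $\overline{\Omega}_\sigma$ already equals $\widehat{\Omega}_1$ near $b_\sigma$, the fourth clause of Lemma~\ref{vert} forces $\rho_\sigma$ to be the identity there; hence $\rho_\sigma^*\overline{\Omega}_\sigma$ agrees with $\widehat{\Omega}_1$ on a neighborhood of \emph{every} vertex of $\ES\sigma$. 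These $\rho_\sigma$'s glue into a PS self-homeomorphism $\rho$ of $K$.

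Setting $h := h_0 \circ \varphi \circ \rho : |K| \to M$ then gives $h^*\Omega = \rho^*\overline{\Omega}$, a form that equals $\widehat{\Omega}_1$ on $\mathcal{O}p(K^{(0)})$ and shares its volume cocycle on $K$. Since $\widehat{\Omega}_1$ is piecewise constant on $K_0$ and hence on the refinement $K$, a direct check using the definition of $\mathcal{C}_K$ shows that $\mathcal{C}_K(h^*\Omega) = \widehat{\Omega}_1$ on $K$: on each $\tau \in K^{(n)}$ both sides are the constant form whose integral over $\tau$ equals $\int_\tau h^*\Omega = \int_\tau \widehat{\Omega}_1$. Hence $\Omega_1 := \mathcal{C}_K(h^*\Omega)$ coincides with $h^*\Omega$ on $\mathcal{O}p(K^{(0)})$, which is the conclusion of the proposition. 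I do not anticipate a real obstacle: the whole argument is a straight assembly of the two preceding lemmas, the only points deserving verification being the gluing compatibility (immediate from the "identity near $\partial\sigma$" clauses) and the identification of $\mathcal{C}_K(h^*\Omega)$ with $\widehat{\Omega}_1$ on the subdivision $K$.
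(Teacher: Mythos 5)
Your proposal is correct and follows essentially the same route as the paper: apply Lemma~\ref{subd} and then Lemma~\ref{vert} simplex by simplex, glue the resulting maps using the ``identity near $\partial\sigma$'' clauses, and set $h = h_0\circ\varphi\circ\rho$ on the stellar subdivision. Your extra verification that $\mathcal{C}_K(h^*\Omega)$ agrees with the original piecewise constant form on the refinement is a detail the paper leaves implicit, and it is handled correctly via the matching volume cocycles.
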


\subsubsection{Induction step}

We will henceforth assume that $h : |K| \to M$ and $\Omega_1$ satisfy the conclusion of Proposition~\ref{conclusion1}. We fix again a $n$-simplex $\sigma$. The goal now is to extend the area where $\Omega_1$ and $h^*\Omega$ agree from neighborhood of the vertices to neighborhoods of $|\sigma^{(1)}|$, $|\sigma^{(2)}|$, $\ldots$, $|\sigma^{(n)}|$ successively. The procedure is the same as for the $0$-skeleton~: assuming that $\Omega_0 = \Omega_1$ on $\mathcal{O}p(|\sigma^{(k-1)}|)$, we first show that $\Omega_0$ can be replaced by a form which admits the same volume cocycle as $\Omega_1$ on $\mathcal{S}\sigma$, and which coincides with $\Omega_1$ on a neighborhood of the new $k$-simplices (those in $\mathcal{S}\sigma^{(k)} \setminus \sigma^{(k)}$). We then handle the $k$-simplices of $\sigma$.

\begin{lemma}\label{subd2}
Let $\Omega_0$ and $\Omega_1$ be smooth volume forms of equal total volume on a linear simplex $\sigma$. Assume moreover that $\Omega_1 = \Omega_0$ on a neighborhood of $|\sigma^{(k-1)}|$ for a certain $k\in\{1,\ldots, n\}$. 
Then there exists a smooth volume form $\overline\Omega$ and a diffeomorphism $\varphi$ of $\sigma$ such that
\begin{itemize}
\item $\varphi^*\Omega_0 = \overline\Omega$ and $\varphi$ fixes $\mathcal{O}p(\partial \sigma)$,
\item $\int_{\tau} \overline\Omega = \int_{\tau} \Omega_1$ for any $\tau \in \mathcal{S}\sigma^{(n)}$,
\item $\overline\Omega = \Omega_1$ on a neighborhood of $|\mathcal{S}\sigma^{(k)}\backslash \sigma^{(k)}|$.
\end{itemize}
\end{lemma}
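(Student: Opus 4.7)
I would follow the template of Lemma~\ref{subd}: construct a smooth positive multiplier $f$ on $\sigma$, set $\overline\Omega = f\,\Omega_0$, and apply Proposition~\ref{mo} to extract $\varphi$. Write $g = \Omega_1/\Omega_0$; by hypothesis $g \equiv 1$ on some open neighborhood $W$ of $|\sigma^{(k-1)}|$. The new $k$-simplices $S := |\mathcal{S}\sigma^{(k)} \setminus \sigma^{(k)}|$ are exactly the $\langle b_\sigma, \tau\rangle$ for $\tau \in \sigma^{(k-1)}$, so $S \cap \partial\sigma = |\sigma^{(k-1)}| \subset W$. Choose a smooth cutoff $\chi : \sigma \to [0,1]$ equal to $1$ on an open neighborhood $N_1$ of $S$ and vanishing outside a slightly larger neighborhood $N_2$ of $S$, selected small enough that $N_2 \cap \partial\sigma \subset W$. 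Set
$$ f_1 = 1 + \chi(g-1). $$
Then $f_1 > 0$ on $\sigma$, $f_1 \equiv g$ on $N_1$, and $f_1 \equiv 1$ both on $\sigma \setminus N_2$ and on $N_2 \cap W$ (where already $g \equiv 1$), hence $f_1 \equiv 1$ on a full neighborhood of $\partial\sigma$.

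\textbf{Correcting the volume cocycle.} For each $\tau \in \mathcal{S}\sigma^{(n)}$ set $c_\tau = \int_\tau \Omega_1 - \int_\tau f_1\,\Omega_0$. When $k < n$, the set $S$ has codimension $n-k \geq 1$ inside $\tau$, so $A_\tau := \mathrm{Int}(\tau) \setminus \bigl(\overline{N_2} \cup \mathcal{O}p_\tau(\partial\tau)\bigr)$ is a nonempty open subset whose $\Omega_0$-measure, by shrinking $N_2$, can be made arbitrarily close to $\int_\tau \Omega_0$. Choose a smooth nonnegative bump $\rho_\tau$ supported in $A_\tau$ with $\int_\tau \rho_\tau\,\Omega_0 = 1$. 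Since $\int_\tau \Omega_1 > 0$, spreading $\rho_\tau$ flatly over a large enough portion of $A_\tau$ guarantees that
$$ f = f_1 + \sum_{\tau \in \mathcal{S}\sigma^{(n)}} c_\tau\,\rho_\tau $$
remains strictly positive on $\sigma$. By construction $f$ still agrees with $f_1$ on $N_1$ and on a neighborhood of $\partial\sigma$, and now $\int_\tau f\,\Omega_0 = \int_\tau \Omega_1$ for every $\tau \in \mathcal{S}\sigma^{(n)}$. The degenerate case $k = n$ is even simpler: then $S = \sigma$, the choice $\chi \equiv 1$ gives $f = g$, and the integral condition is automatic.

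\textbf{Applying Moser.} Setting $\overline\Omega = f\,\Omega_0$, we obtain a smooth volume form on $\sigma$ which coincides with $\Omega_0$ on a neighborhood of $\partial\sigma$ and satisfies $\int_\sigma \overline\Omega = \sum_\tau \int_\tau \Omega_1 = \int_\sigma \Omega_1 = \int_\sigma \Omega_0$. Proposition~\ref{mo} then produces a diffeomorphism $\varphi$ of $\sigma$, stationary on $\mathcal{O}p(\partial\sigma)$, with $\varphi^*\Omega_0 = \overline\Omega$. The three bulleted conclusions are immediate: the first from the properties of $\varphi$, the second from the corrected integrals, and the third from $\overline\Omega = g\,\Omega_0 = \Omega_1$ on $N_1 \supset \mathcal{O}p(S)$. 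The only delicate point is keeping $f$ strictly positive after the $c_\tau$-corrections; this is secured by the codimension hypothesis $k < n$, which leaves enough $\Omega_0$-mass inside each $A_\tau$ to spread $\rho_\tau$, combined with the strict positivity of $\int_\tau \Omega_1$.
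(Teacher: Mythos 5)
Your proposal is correct and follows essentially the same route as the paper: the paper's proof simply asserts that one can choose a smooth positive $f$ equal to $1$ near $\partial\sigma$, equal to $\Omega_1/\Omega_0$ near $|\mathcal{S}\sigma^{(k)}\setminus\sigma^{(k)}|$, and with the right volume cocycle, then sets $\overline\Omega=f\Omega_0$ and applies Proposition~\ref{mo}. You merely make that choice of $f$ explicit (cutoff plus bump-function corrections, with positivity guaranteed by $\int_\tau\Omega_1>0$), which is a welcome elaboration but not a different argument.
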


\begin{proof}
The proof is exactly the same as that of Lemma \ref{subd}. The additional assumption that $\Omega_1 = \Omega_0$ near $|\sigma^{(k-1)}|$ allows us to choose a function $f$ that achieves the value one near $\partial \sigma$ and satifies $f\Omega_0 = \Omega_1$ near $|\mathcal{S}\sigma^{(k)}\backslash \sigma^{(k)}|$, in addition to the condition that $\int_\tau f\Omega_0 = \int_\tau \Omega_1$ for each $\tau \in \mathcal{S}\sigma^{(n)}$.
\end{proof}

Now fix a $k$-simplex $\tau$ of $\sigma$ and assume that $\overline\Omega = \Omega_1$ near $\partial \tau$. The idea, to extend the equality to a neighborhood of $\tau$, is to consider a collection of parallel $(n-k)$-planes along $\tau$, transverse to $\tau$, and to apply Lemma \ref{vert} in each of them. 

\begin{lemma}\label{extension}
Consider smooth volume forms $\overline\Omega$ and $\Omega_1$ on a linear simplex $\sigma$ such that $\overline\Omega = \Omega_1$ on $\mathcal{O}p(|\sigma^{(k-1)}|)$ for a certain $k\in\{1, \ldots, n\}$. Then there exists a piecewise smooth homeomorphism $\rho$ of $\mathcal{S}{\sigma}$ such that
\begin{itemize}
\item $\rho$ is the identity on $\sigma\backslash \mathcal{O}p(|\sigma^{(k)}|)$ and fixes $\partial \sigma$,
\item $\rho^*\overline\Omega = \Omega_1$ on $\mathcal{O}p(|\sigma^{(k)}|)$,
\item $\rho^*\overline\Omega$ and $\Omega_1$ have the same volume cocyle on $\mathcal{S}{\sigma}$.
\end{itemize}
\end{lemma}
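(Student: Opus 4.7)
The plan is to handle the $k$-simplices $\tau \in \sigma^{(k)}$ one at a time, constructing for each a PS homeomorphism $\rho_\tau$ of $\mathcal{S}\sigma$ supported in a small tubular neighborhood of the open face $\mathrm{int}(\tau)$ such that $\rho_\tau^*\overline{\Omega} = \Omega_1$ near $\mathrm{int}(\tau)$ and $\rho_\tau$ is the identity elsewhere. Since $\overline{\Omega} = \Omega_1$ already holds near $|\sigma^{(k-1)}|$, hence near $\partial\tau$, each $\rho_\tau$ can be chosen with support disjoint from $\partial\tau$ and from the analogous supports attached to the other $k$-simplices, so that the product $\rho = \prod_{\tau \in \sigma^{(k)}} \rho_\tau$ is a well-defined PS homeomorphism of $\mathcal{S}\sigma$ satisfying $\rho^*\overline{\Omega} = \Omega_1$ on $\mathcal{O}p(|\sigma^{(k)}|)$ and fixing $\partial\sigma$.

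To construct $\rho_\tau$, I fix a linear retraction $\pi : \mathcal{O}p(\tau) \cap |\mathrm{st}_{\mathcal{S}\sigma}(\tau)| \to \tau$ whose fibers $\Sigma_y = \pi^{-1}(y)$ are $(n-k)$-dimensional affine slices transverse to $\tau$. For $y \in \mathrm{int}(\tau)$ each $\Sigma_y$ inherits from $\mathcal{S}\sigma$ the structure of a simplicial cone over the link $\mathrm{lk}_{\mathcal{S}\sigma}(\tau)$ with apex $y$, and its combinatorial type is independent of $y$ throughout $\mathrm{int}(\tau)$. The slices $\Sigma_y$ thus foliate a tubular neighborhood of $\mathrm{int}(\tau)$ by $(n-k)$-dimensional polyhedra, each of which carries a distinguished interior vertex.

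I now apply a parametric, cone-shaped analogue of Lemma \ref{vert} on each slice, with $y$ playing the role of the distinguished vertex: this produces a PS homeomorphism $\rho_y$ of $\Sigma_y$ supported near $y$, fixing $\partial\Sigma_y$, preserving the volume cocycle, and satisfying $\rho_y^*\overline{\Omega}|_{\Sigma_y} = \Omega_1|_{\Sigma_y}$ near $y$. The parametric statement of Lemma \ref{vert} makes $\rho_y$ smooth in $y$; for $y$ near $\partial\tau$ one has $\overline{\Omega} = \Omega_1$, so $\rho_y$ reduces to the identity, which lets $\rho_\tau$ extend by the identity outside the tubular neighborhood of $\mathrm{int}(\tau)$.

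The main obstacle is that each slice $\Sigma_y$ is not a single linear simplex but a simplicial cone made up of several $(n-k)$-simplices meeting at $y$ along internal walls; one must either upgrade Lemma \ref{vert} to allow such a polyhedral domain with a distinguished central vertex, or apply it inside each constituent simplex and check that the resulting maps glue continuously across the internal codimension-one walls. The explicit integral form of $\varphi_F$ in the proof of Lemma \ref{vert}, combined with the interpolation Lemma \ref{inter2}, is what makes this gluing tractable. Finally, the slice-by-slice volume-cocycle preservation integrates along $\tau$ to the global volume-cocycle identity on $\mathcal{S}\sigma$, given that $\overline{\Omega}$ and $\Omega_1$ have matching volume cocycles on $\mathcal{S}\sigma$ at the start of the inductive step (as supplied by Lemma \ref{subd2}).
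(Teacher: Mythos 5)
Your strategy is the same as the paper's: treat one $k$-face $\tau$ at a time, foliate a neighbourhood of $\mathrm{int}(\tau)$ by $(n-k)$-dimensional affine slices transverse to $\tau$, apply a parametric version of Lemma \ref{vert} in each slice with the point of $\tau$ as distinguished vertex, and use the hypothesis $\overline\Omega=\Omega_1$ near $|\sigma^{(k-1)}|$ to make everything the identity near $\partial\tau$ so that the pieces glue. Two points you leave open are resolved in the paper by concrete choices. First, the identity $\rho_y^*\overline\Omega|_{\Sigma_y}=\Omega_1|_{\Sigma_y}$ is not meaningful as written, since the restriction of an $n$-form to an $(n-k)$-dimensional slice vanishes; the paper contracts $\overline\Omega$ and $\Omega_1$ with constant vector fields $X_{i_1},\dots,X_{i_k}$ spanning the direction of $\tau$, obtaining $(n-k)$-forms $\overline\omega$ and $\omega_1$ that restrict to honest volume forms on the slices, and then the slicewise identity $\rho_x^*\overline\omega=\omega_1$, together with the fact that $\rho$ preserves each slice and is trivial in the $\tau$-directions, recovers $\rho^*\overline\Omega=\Omega_1$ near $\tau$. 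Second, the obstacle you flag --- that a normal slice is a simplicial cone over $\mathrm{lk}(\tau)$ rather than a single simplex --- largely disappears with the right choice of slicing: taking $\mu_x$ to be the affine $(n-k)$-plane through $x\in\mathrm{int}(\tau)$ spanned by the constant directions $v_l-v_{i_0}$ for $l\notin\{i_0,\dots,i_k\}$ makes $\sigma\cap\mu_x$ a single linear $(n-k)$-simplex with $x$ as a vertex, the segment $\nu\cap\mu_x$ (where $\nu=\langle\tau,b\rangle$) playing the role of the edge $\langle v,b\rangle$ of Lemma \ref{vert}; since these simplices vary with $x$, one reparameterizes them affinely by a fixed standard simplex before invoking the lemma, whose explicit construction (via Lemma \ref{inter2}) then gives the smooth dependence on $x$ you need. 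With these two adjustments your argument coincides with the paper's; the volume-cocycle statement is obtained exactly as you say, by integrating the slicewise conclusion over $\tau$, the matching of cocycles on $\mathcal{S}\sigma$ having been arranged beforehand by Lemma \ref{subd2}.
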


\begin{proof}
Let $v_0, \ldots, v_n$ denote the vertices of $\sigma$. Consider the $k$-face $\tau$ of $\sigma$ generated by the vertices $v_{i_0}, \ldots, v_{i_k}$, where $0 \leq i_0 <\ldots < i_k \leq n$. On a neighborhood $U$ of $\tau$ in $\sigma$, define the constant vector fields 
$$
X_l = v_l - v_{i_0}
$$
for any $l \neq i_0$. Observe that $X_l$ is tangent to $\tau$ if and only if $v_l$ is a vertex of $\tau$. For any $x \in \mathrm{int}(\tau)$, consider the $(n-k)$-dimensional affine plane $\mu_x$ passing through $x$ and generated by the $X_l$'s with $l \notin \{i_1, \ldots, i_k\}$. If $b$ denotes the barycenter of $\sigma$, let $\nu$ be the $(k+1)$-simplex $\nu$ of $\mathcal{S}\sigma$ generated by $\tau$ and $b$, and notice that the intersection $\nu \cap \mu_x$ is a segment for any $x\in \mathrm{int}(\tau)$. Now consider the $(n-k)$-forms
\begin{align*}
\overline{\omega} & = \iota(X_{i_1},\ldots, X_{i_k}) \; \overline\Omega, \\
\omega_1 & = \iota(X_{i_1},\ldots, X_{i_k}) \; \Omega_1.
\end{align*}
Notice that their restrictions to the simplices $\sigma \cap \mu_x$, $x \in \mathrm{int}(\tau)$, are volume forms. Now, for each $x \in \mathrm{int}(\tau)$, we apply Lemma \ref{vert} to obtain a PS homeomorphism $\rho_x$ of $\sigma  \cap \mu_x$ which is the identity outside of $U$, moves points in the direction of the segment $\nu \cap \mu_x$, fixes $\partial \sigma \cap \mu_x$ and satisfies $(\rho_x)^*\overline{\omega} = \omega_1$ in $\mathcal{O}p(\tau) \cap \mu_x$. We may also assume that the family of maps $\rho_x$ depends smoothly on $x \in {\rm Int} (\tau)$ and coincides with the identity map for $x$ near~$\partial \tau$. 

Notice that although we may not strictly speaking directly apply Lemma \ref{vert} because the simplices $\sigma  \cap \mu_x$ vary, we could reparameterize them affinely by a standard simplex $\Delta_0$, work there and push-forward the resulting maps to $\sigma \cap \mu_x$. 

This procedure provides us with a smooth map $\rho_\tau : \sigma \to \sigma$ which fixes $\tau$ and $\partial \sigma$, preserves each simplex in $\mathcal{S}\sigma$, is the identity outside $U$ and satisfies $\rho_\tau^*\overline{\Omega} = \Omega_1$ near $\tau$. Finally, the various $\rho_\tau$ for $\tau \in \sigma^{(k)}$ may be glued into a PS homeomorphism $\rho$ of $\mathcal{S}\sigma$ that satisfies the desired properties.
\begin{figure}[htb]
  \centering
  \def\svgwidth{180pt}
  \input{subdivision2.pdf_tex}
\caption{Stellar subdivision of a 3-simplex and the simplex $\nu$.}
\end{figure}
\end{proof}

Notice that, for $k = n-1$, it is not necessary to subdivide $\sigma$. Once again, the piecewise smooth homeomorphisms resulting from Lemmas \ref{subd2} and \ref{extension} 
fix the boundaries of the $n$-simplices and thus glue into global piecewise smooth homeomorphisms of $K$. The following proposition summarizes what has been done so far.

\begin{prop}\label{conclusion2}
let $h : |K|\to M$ be a smooth triangulation. Consider a smooth volume form $\Omega$ on $M$, and a piecewise smooth volume form $\Omega_1$ on $K$ such that $h^*\Omega$ and $\Omega_1$ have the same volume cocycle. Assume that $h^*\Omega$ coincides with $\Omega_1$ on a neighborhood of $|K^{(k-1)}|$ for a certain $1 \leq k \leq n$. Then there exists a smooth triangulation $\ol{h} : |\mathcal{S}K| \to M$ for which $\ol{h}^*\Omega$ and $\Omega_1$ have the same volume cocycles on $\mathcal{S}K$ and coincide on a neighborhood of $|\mathcal{S}K^{(k)}|$. 
\end{prop}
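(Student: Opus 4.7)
The plan is to assemble global piecewise smooth homeomorphisms of $\mathcal{S}K$ out of simplexwise constructions from the two preceding lemmas, exactly mirroring the two-step structure used for the $0$-skeleton case in Proposition~\ref{conclusion1}.

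First, I would fix a top-dimensional simplex $\sigma \in K^{(n)}$ and apply Lemma~\ref{subd2} to the pair of forms $(h^*\Omega)|_\sigma$ and $\Omega_1|_\sigma$. These forms have the same total volume on $\sigma$ (same volume cocycle on $K$) and coincide on a neighborhood of $|\sigma^{(k-1)}| \subset |K^{(k-1)}|$, so the hypotheses hold. This produces a smooth volume form $\overline\Omega^\sigma$ on $\sigma$ and a diffeomorphism $\varphi_\sigma$ of $\sigma$, fixing $\mathcal{O}p(\partial\sigma)$, with $\varphi_\sigma^*((h^*\Omega)|_\sigma) = \overline\Omega^\sigma$, volume cocycle of $\overline\Omega^\sigma$ equal to that of $\Omega_1$ on $\mathcal{S}\sigma$, and $\overline\Omega^\sigma = \Omega_1$ on $\mathcal{O}p(|\mathcal{S}\sigma^{(k)} \setminus \sigma^{(k)}|)$. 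Because each $\varphi_\sigma$ is the identity near $\partial\sigma$, the collection $\{\varphi_\sigma\}$ glues into a PS homeomorphism $\varphi$ of $\mathcal{S}K$, and the $\overline\Omega^\sigma$ assemble into a piecewise smooth $n$-form $\overline\Omega$ on $\mathcal{S}K$.

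Next, on each $\sigma$ I would apply Lemma~\ref{extension} to $\overline\Omega^\sigma$ and $\Omega_1|_\sigma$. The required hypothesis $\overline\Omega^\sigma = \Omega_1$ on $\mathcal{O}p(|\sigma^{(k-1)}|)$ holds because $\varphi_\sigma$ is the identity near $\partial\sigma \supset |\sigma^{(k-1)}|$, so $\overline\Omega^\sigma = (h^*\Omega)|_\sigma = \Omega_1$ there. This yields a PS homeomorphism $\rho_\sigma$ of $\mathcal{S}\sigma$ fixing $\partial\sigma$ (and in fact the identity outside $\mathcal{O}p(|\sigma^{(k)}|)$), with $\rho_\sigma^*\overline\Omega^\sigma = \Omega_1$ on $\mathcal{O}p(|\sigma^{(k)}|)$ and with the volume cocycle on $\mathcal{S}\sigma$ preserved. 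Again the boundary-fixing property lets the $\rho_\sigma$ glue into a PS homeomorphism $\rho$ of $\mathcal{S}K$.

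Setting $\ol{h} = h\circ\varphi\circ\rho : |\mathcal{S}K| \to M$, one has $\ol{h}^*\Omega = \rho^*\varphi^*h^*\Omega = \rho^*\overline\Omega$. The volume cocycle on $\mathcal{S}K$ then matches $\Omega_1$'s by transitivity through the two lemmas. To verify the agreement near $|\mathcal{S}K^{(k)}|$, I would split into cases: near a point of an old $k$-simplex in $|\sigma^{(k)}|$, the conclusion of Lemma~\ref{extension} gives $\rho^*\overline\Omega = \Omega_1$ directly; near an interior point of a new $k$-simplex $\nu = \langle \tau, b_\sigma\rangle$ ($\tau \in \sigma^{(k-1)}$), points sufficiently close to $\tau$ lie in $\mathcal{O}p(|\sigma^{(k)}|)$ and are handled by Lemma~\ref{extension}, while points further away lie outside the support of $\rho$, so $\rho^*\overline\Omega = \overline\Omega = \Omega_1$ by Lemma~\ref{subd2}. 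The main obstacle is bookkeeping: one needs to choose the neighborhoods produced by the two lemmas compatibly so that the two regions of equality overlap to form an honest neighborhood of $|\mathcal{S}K^{(k)}|$, but no new technical input is required beyond what the two lemmas already deliver.
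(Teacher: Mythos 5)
Your proposal is correct and follows essentially the same route as the paper: apply Lemma~\ref{subd2} and then Lemma~\ref{extension} simplex by simplex, glue the resulting maps (which fix $\mathcal{O}p(\partial\sigma)$, resp.\ $\partial\sigma$) into global PS homeomorphisms $\varphi$ and $\rho$ of $\mathcal{S}K$, and set $\ol{h}=h\circ\varphi\circ\rho$. The paper states this only as a one-line summary, so your extra care in checking the hypothesis of Lemma~\ref{extension} and in covering the new $k$-simplices $\langle\tau,b_\sigma\rangle$ by the two regions of agreement is a welcome elaboration rather than a deviation.
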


\subsection{Interpolation}\label{interpolation}

In this section we prove Lemma \ref{inter2} needed in the proof of Lemma \ref{vert}. The fundamental ingredient is an explicit procedure to build from a smooth increasing function $\varphi : \R^+ \to \R^+$ with $\varphi(0) = 0$, a smooth increasing interpolation $\ol{\varphi}$ between $\varphi$ near the origin and the identity near $R$. This result is of course far from surprising but the point is the explicit character of the construction which yields a parametric version of the interpolation process.

To build such an interpolation, we use the convolution product with regularizing functions. Consider the smooth even function
$$
\rho_0 : \R \to \R^+ : x \mapsto \rho_0(x)=\left\{
\begin{array}{ll}
e^{\left(-\frac{1}{1-x^2}\right)} &\mathrm{if}~~ |x| < 1\\
0 & \mathrm{if}~~ |x| \geq 1.
\end{array}
\right.
$$
Let us denote its integral by $A$ and define the functions $\rho$ and $\rho_\delta$, for $\delta >0$, by $\rho = \frac{1}{A} \rho_0$ and $\rho_\delta(x) = \frac{1}{\delta}\rho(\frac{x}{\delta})$ respectively. Then $\int \rho_\delta = 1$ for all $\delta$. 

As is well known, the convolution product $f * \rho_\delta$ of a continuous function $f : \R \to \R$ and $\rho_\delta$ is a smooth function that $C^0$ converges to $f$ when $\delta$ tends to $0$. The following basic properties will be useful later on :
\begin{itemize}
\item If $f$ is an increasing function, then $f*\rho_\delta$ is also increasing. Indeed, observe that 
\begin{align*}
(f*\rho_\delta)'(x)&=\int_{-\delta}^{\delta}\rho'_\delta(y)f(x-y)dy\\
&=\int_{0}^{\delta}\rho'_\delta(y)\left(f(x-y)-f(x+y)\right)dy
\end{align*}
which is non-negative since both $\rho'_\delta(y)$ and $f(x-y) - f(x+y)$ are non-positive for any $y\geq0$.
\item If $f$ is a continuous piecewise linear function of the following simple type~:
$$
f(x)=\left\{
\begin{array}{ll}
ax+b &\mathrm{if}~~ x < \epsilon \\
cx+d & \mathrm{if}~~ x \geq \epsilon
\end{array}
\right.
$$
for some $a, b, c, d, \epsilon \in \R$ with $a\epsilon + b = c \epsilon + d$, 
then $(f*\rho_\delta)(x)=f(x)$ if $|x - \epsilon| \geq \delta$. Indeed, if $x \geq \epsilon+\delta$, then $x-y\geq \epsilon$ for any $y \in [-\delta,\delta]$, so that
\begin{align*}
(f*\rho_\delta)(x) &= \int_{-\delta}^{\delta} f(x-y)\rho_\delta(y)dy \\
&=\int_{-\delta}^{\delta}\bigl( c(x-y) + d \bigr)\rho_\delta(y)dy\\
&=(cx+d)\int_{-\delta}^{\delta}\rho_\delta(y)dy -c \int_{-\delta}^{\delta}y\rho_\delta(y)dy\\
&=cx+d,
\end{align*}
since $y\rho_\delta(y)$ is an odd function. The case $x \leq \epsilon-\delta$ is close to identical.
\end{itemize}

\begin{lemma}\label{inter}
Consider a smooth non-decreasing map $\varphi : [0,R] \to \R^+$ such that $\varphi(0)=0$. Then there exists $r > 0$ and a smooth non-decreasing map $\overline{\varphi} : [0,R] \to \R^+$ such that 
\begin{itemize}
\item $\overline{\varphi}$ coincides with $\varphi$ on $[0,r]$,
\item $\overline{\varphi}(x)=x$ for any $x\in [R-r,R]$,
\end{itemize}
Moreover, if $\varphi=\mathrm{Id}$, then $\overline{\varphi}=\mathrm{Id}$. Furthermore, if $\lambda \to R_\lambda$ is a smooth map defined on some manifold $\Lambda$ and if $\varphi_\lambda : [0, R_\lambda] \to \R^+$ is a smooth family of maps enjoying the same properties as $\varphi$, then $r = r(\lambda)$ and $\overline{\varphi}_\lambda$ depend smoothly on $\lambda$ as well.
\end{lemma}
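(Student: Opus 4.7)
The plan is to construct $\overline{\varphi}$ as the antiderivative of a suitably chosen smooth non-negative function $u : [0, R] \to \R^+$, rather than by a direct convolution of $\varphi$ (which would destroy the equality with $\varphi$ on $[0, r]$, since convolution does not preserve non-linear smooth functions pointwise). Setting $\overline{\varphi}(x) = \int_0^x u(s)\,ds$, the conclusions of the lemma translate into the following conditions on $u$: (i) $u = \varphi'$ on $[0, r]$, giving $\overline{\varphi} = \varphi$ there (using $\varphi(0) = 0$); (ii) $u \equiv 1$ on $[R - r, R]$; and (iii) $\int_0^{R - r} u(s)\, ds = R - r$, so that $\overline{\varphi}(R - r) = R - r$ and hence $\overline{\varphi}(x) = x$ on $[R - r, R]$ by (ii). In addition, $u$ must be smooth and non-negative.

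First I would pick $r > 0$ small enough that $\varphi(r) < R - r$, which is possible by continuity since $\varphi(0) = 0$; this guarantees the positivity of the integral $R - r - \varphi(r)$ that $u$ must achieve over the middle interval $[r, R - r]$. Next I would choose two smooth flat-topped cutoffs $\chi_1, \chi_2 : [0, R] \to [0, 1]$, with $\chi_1 \equiv 1$ on $[0, r]$, supported in $[0, r + \epsilon]$, and with all derivatives vanishing at $r$ and at $r + \epsilon$; symmetrically $\chi_2 \equiv 1$ on $[R - r, R]$, supported in $[R - r - \epsilon, R]$. For $\epsilon > 0$ small, the supports of $\chi_1$ and $\chi_2$ are disjoint. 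Then define
\[
u(x) = \chi_1(x)\, \varphi'(x) + \chi_2(x) + \bigl(1 - \chi_1(x) - \chi_2(x)\bigr)\, K,
\]
where the constant $K \geq 0$ is determined by condition (iii). The flatness of the cutoffs ensures that $u$ is smooth to all orders at $r$ and at $R - r$ (with $u^{(k)}(r) = (\varphi')^{(k)}(r)$ and $u^{(k)}(R - r) = 0$ for $k \geq 1$), so $\overline{\varphi}$ is smooth on $[0, R]$ with Taylor expansions matching those of $\varphi$ at $r$ and of the identity at $R - r$.

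The key verification is $K \geq 0$. Solving (iii) gives $K$ explicitly as the ratio of the numerator $R - r - \int_0^{R - r}\chi_1\varphi' - \int_0^{R - r}\chi_2$ to the positive denominator $\int_0^{R - r}(1 - \chi_1 - \chi_2)$. For small $r$ and $\epsilon$, the first integral in the numerator is at most $(r + \epsilon) \max_{[0, r + \epsilon]}\varphi'$ and the second is at most $\epsilon$, while the denominator is close to $R - 2r - 2\epsilon$. Hence $K$ can be made arbitrarily close to $1$ by shrinking $r$ and $\epsilon$, so in particular $K > 0$ and $u$ is non-negative as a non-negative combination of non-negative quantities. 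When $\varphi = \mathrm{Id}$, direct substitution yields $K = 1$ and $u \equiv 1$, whence $\overline{\varphi} = \mathrm{Id}$.

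For the parametric version, since the construction is explicit and continuous in the data $(\varphi, R)$, a smooth choice of $r(\lambda)$ (uniformly small on compact pieces of $\Lambda$) together with cutoffs obtained from fixed model cutoffs by smooth rescaling makes $K(\lambda)$ depend smoothly on $\lambda$ through the integral formula, and hence $\overline{\varphi}_\lambda$ depends smoothly on $\lambda$; globalizing over non-compact $\Lambda$ can be done by a standard partition of unity argument. The main obstacle I expect is the simultaneous non-negativity of $u$ and smoothness to infinite order at the transition points $r$ and $R - r$; the three-term decomposition above resolves this by combining flat-topped cutoffs (for the infinite-order matching) with the free parameter $K$ (for the integral constraint and positivity), which a naive convex combination of $\varphi'$ and $1$ would lack.
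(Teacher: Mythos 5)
Your construction is correct, and it takes a genuinely different route from the paper's. The paper works at the level of the function itself: it first builds a piecewise smooth interpolation $h$ whose graph follows $\varphi$ up to a point $(\epsilon,\varphi(\epsilon))$, then an affine segment, then the diagonal, and it smooths the two corners by convolution with a mollifier $\rho_\delta$ --- using a modified kernel $\int h(x-\chi(x)y)\rho_\delta(y)\,dy$ near the left endpoint precisely to work around the fact you identify, namely that an unmodified convolution would destroy the exact equality with $\varphi$ near $0$. You instead work at the level of the derivative: you glue $\varphi'$, a free constant $K$, and $1$ by flat cutoffs, fix $K$ by the normalization $\int_0^{R-r}u = R-r$, and integrate. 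This sidesteps the convolution trick entirely, makes monotonicity ($u\ge 0$), smoothness, and the case $\varphi=\mathrm{Id}$ (where $K=1$, $u\equiv 1$) immediate, and is just as explicit, hence equally amenable to the parametric version needed in Lemma \ref{inter2}. The one point where the paper's version is slightly tidier is the choice of the transition scale: the paper pins down $\epsilon$ canonically (the point of the graph at distance $R/4$ from the origin), which gives smooth dependence on $\lambda$ for free, whereas your $r$ is only constrained to be ``small enough''; your remark that admissibility of $r$ is downward-closed, so that local choices can be averaged by a partition of unity on $\Lambda$, does close this gap. Both arguments prove the lemma as stated (non-decreasing rather than increasing output when $\varphi'$ vanishes somewhere), so neither is weaker on that score.
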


\begin{figure}[htb]
  \centering
  \def\svgwidth{180pt}
  \input{interpolation3.pdf_tex}
\end{figure}

\begin{proof}
Consider the graph of $\varphi$ in $\R^2$ and fix $\epsilon >0$ such that the point $(\epsilon, \varphi(\epsilon))$ is at distance $\frac R4$ from the origin. Of course, the number $\epsilon$ depends smoothly on $R$. Define $g: [\epsilon, \frac{3R}{4}] \to \R$ to be the affine map whose graph is the segment joining $(\epsilon, \varphi(\epsilon))$ to $(\frac{3R}4,\frac{3R}4)$, and let $h$ be the piecewise smooth function given by 
$$
h(x)=\left\{
\begin{array}{ll}
\varphi(x) &\mathrm{if}~~ x\in[0,\epsilon],\\
g(x) & \mathrm{if}~~ x\in ]\epsilon,\frac{3R}{4}[,\\
x &\mathrm{if}~~ x\in[\frac{3R}4,R].
\end{array}
\right.
$$

\begin{figure}[htb]
  \centering
  \def\svgwidth{180pt}
  \input{interpolation2.pdf_tex}
\caption{The piecewise smooth map $h$.}\label{h2}
\end{figure}

This map provides a non-decreasing interpolation between $\varphi$ and the identity, but it is smooth neither at $x=\epsilon$ nor at $x = \frac{3R}4$ (cf. Figure \ref{h2}). Now fix $\delta=\frac{\epsilon} 2$ and consider the regularizing function $\rho_\delta$ defined previously. If $x \in [\frac{R}2,R]$, define $\overline{\varphi}(x)=(h*\rho_\delta)(x)$. Since $h$ is piecewise linear and non-decreasing on $[\frac{R}2,R]$, the function $\overline{\varphi}$ is a smooth monotone map which coincides with $h$ outside the interval $(\frac{3R}4-\delta,\frac{3R}4+\delta)$. On the interval $[0,\frac{R}2)$, the idea is also to replace $h$ by the convolution product $h*\rho_\delta$, but we have to modify the convolution slightly so that $h$ and $h*\rho_\delta$ can be smoothly glued. Let $\chi$ be a smooth non-decreasing function such that $\chi(x)=0$ on $[0,\frac{\epsilon}3]$, $\chi(x)=1$ if $x \geq \frac{2\epsilon}3$ and which depends smoothly on $\epsilon$. If $x\in[0,\frac{R}2]$, define
$$
\overline{\varphi}(x)=\int_{-\delta}^{\delta}h\bigl(x-\chi(x)y\bigr)\rho_\delta(y)dy.
$$
Then $\overline{\varphi}(x)$ is the convolution product of $h$ and $\rho_\delta$ for $x\geq\frac{2\epsilon}3$. Moreover $\overline{\varphi}(x) = h(x)$ when $x \in [0,\frac{\epsilon}3]$. Observe finally that $\overline{\varphi}$ is increasing by a similar argument as for the convolution product. 
Now if $\varphi=\mathrm{Id}$, then $h=\mathrm{Id}$ by construction and, therefore $\overline{\varphi}(x)=\mathrm{Id}$.
\end{proof}

Now, let $\sigma$ be a linear $n$-simplex of $\R^n$ and $v$ a vertex of $\sigma$. Consider the stellar subdivision $\mathcal{S}\sigma$ of $\sigma$ and take a closed ball $B$ in $\R^n$ centered at $v$ that does not contain the barycenter $b$ of $\sigma$. Denote by $\pi : B \cap \sigma \to \partial \sigma$ the projection onto $B \cap \partial \sigma$ parallel to $b-v$. 

\begin{lemma}\label{inter2} 
Assume that $\varphi : B \cap \sigma \to \varphi(B \cap \sigma) \subset \sigma$ is a homeomorphism which is smooth on the traces in $B$ of the simplices of $\mathcal{S}\sigma$ and enjoys the following properties~:
\begin{itemize}
\item $\varphi$ fixes the points of $B \cap \partial \sigma$, 
\item $\varphi$ is parallel to the edge $b-v$, in the sense that $\pi(\varphi(x))=\pi(x)$ for any $x \in B \cap \sigma$,
\item $\varphi$ is the identity near $\partial B \cap \partial \sigma$.
\end{itemize}
Then there exists a PS homeomorphism $\overline\varphi$ of $\mathcal{S}\sigma$ which coincides with $\varphi$ near $v$ and with the identity outside $B \cap \sigma$. If $\varphi$ depends smoothly on an additional parameter $\lambda$ that varies in a smooth manifold $\Lambda$, then so does~$\overline\varphi$.
\end{lemma}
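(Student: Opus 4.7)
I would exploit the product structure on $B\cap\sigma$ provided by $\pi$: each point writes uniquely as $x = y + t(b-v)$ with $y = \pi(x)\in B\cap\partial\sigma$ and $t\in[0,R_y]$, where $R_y$ is the length (in the $(b-v)$-parametrization) of the fiber over $y$ inside $B\cap\sigma$. The assumption $\pi\circ\varphi=\pi$ forces $\varphi$ to be a shear along these fibers:
$$\varphi(y+t(b-v)) = y + \varphi_y(t)(b-v)$$
for some real-valued function $\varphi_y$ on $[0,R_y]$. The remaining hypotheses translate into: $\varphi_y(0)=0$ (since $\varphi$ fixes $B\cap\partial\sigma$), $\varphi_y$ is strictly increasing (homeomorphism preserving the fiber direction), $\varphi_y$ is smooth on each sub-interval of $[0,R_y]$ coming from the trace of a single $n$-simplex of $\mathcal{S}\sigma$, and $\varphi_y = \mathrm{Id}$ whenever $y$ is close to $\partial B\cap\partial\sigma$. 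Moreover $R_y$ itself varies smoothly with $y$ within each region of constant combinatorial type.

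\textbf{Construction.} I then apply the parametric version of Lemma~\ref{inter} to the family $\{\varphi_y\}$, treating $y$ (together with any external parameter $\lambda$) as the parameter, to produce a family of smooth non-decreasing maps $\overline\varphi_y:[0,R_y]\to\R^+$ satisfying $\overline\varphi_y = \varphi_y$ on $[0,r_y]$ and $\overline\varphi_y(t) = t$ on $[R_y-r_y,R_y]$, for some $r_y>0$ depending smoothly on $y$. I then define
$$\overline\varphi(x) = \begin{cases} y + \overline\varphi_y(t)(b-v) & \text{if }x = y + t(b-v)\in B\cap\sigma,\\ x & \text{otherwise},\end{cases}$$
which is continuous because $\overline\varphi_y(t)=t$ for $t$ near $R_y$ and $\overline\varphi_y = \mathrm{Id}$ whenever $y$ is near $\partial B\cap\partial\sigma$ (inherited from $\varphi$). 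The claimed conclusions then all follow at once: $\overline\varphi=\varphi$ in a neighborhood of $v$ (where $t$ is small, so $\overline\varphi_y=\varphi_y$); $\overline\varphi=\mathrm{Id}$ outside $B\cap\sigma$ by definition; $\overline\varphi$ is a homeomorphism of $\mathcal{S}\sigma$ because each $\overline\varphi_y$ is strictly increasing (convolution of an increasing function with a non-negative mollifier remains increasing, as observed before Lemma~\ref{inter}); and the smooth dependence on $\lambda$ is inherited from the parametric conclusion of Lemma~\ref{inter}.

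\textbf{Main obstacle.} The delicate point is verifying that $\overline\varphi$ is PS with respect to $\mathcal{S}\sigma$, since the fiber direction $b-v$ is transverse to the subdivision rather than aligned with it. The input $\varphi_y$ is only piecewise smooth globally, with corners at the values of $t$ where the fiber exits one $n$-simplex of $\mathcal{S}\sigma$ and enters another; by contrast Lemma~\ref{inter} is literally stated for smooth $\varphi$. However, the explicit construction used there (piecewise linear interpolation $h$ followed by convolution with the mollifier $\rho_\delta$) applies verbatim to piecewise smooth input, the convolution simultaneously smoothing any preexisting corners and those newly introduced by the interpolation. I would then check smoothness one simplex at a time: on each $\tau\in\mathcal{S}\sigma$ the fiber coordinates $(y,t)$ are smooth, the lengths $R_y$ and $r_y$ vary smoothly with $y$ on $\tau\cap B$, and the explicit convolution formula then yields an $\overline\varphi_y(t)$ that is jointly smooth in $(y,t)$ on $\tau\cap B$. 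This gives smoothness of $\overline\varphi|_\tau$ on each simplex and hence the desired PS structure, with parametric smoothness in $\lambda$ carried along throughout.
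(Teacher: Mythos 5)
Your argument is essentially the paper's own proof: you view $\varphi$ as the family of fiberwise maps $\varphi_y : [0,R_y]\to\R^+$ over $y\in B\cap\partial\sigma$, apply the parametric version of Lemma~\ref{inter} face by face, and reassemble, just as the paper does. The only difference is your closing discussion of fibers possibly crossing between $n$-simplices of $\mathcal{S}\sigma$ --- a subtlety the paper passes over in silence, and one that in any case disappears once $B$ is taken small enough, since each fiber $\pi^{-1}(y)\cap B$ then stays inside a single $n$-simplex of $\mathcal{S}\sigma$ --- and your handling of it is sound.
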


\begin{proof}
Observe that $\varphi$ can be seen as a parametric family of maps 
$$
\varphi_\lambda : [0,R_\lambda]\to \R^+,
$$
with $\varphi_\lambda(0) = 0$, where the parameter $\lambda$ varies in $B \cap \partial \sigma$ and $R_\lambda$ denotes the length of the segment $\pi^{-1}(\lambda) \cap B$. Note that 
$$B \cap \partial \sigma = \bigcup_{\tau \in \sigma^{(n-1)}} B \cap \tau,$$
and $\varphi_\lambda$ depends smoothly on $\lambda \in B \cap \tau$, for any $\tau \in \sigma^{(n-1)}$. It follows from Lemma \ref{inter} that there exists a family of diffeomorphisms $\overline\varphi_\lambda$ which coincide with $\varphi_\lambda$ near 0, with the identity near $R_\lambda$, and depends smoothly on $\lambda$ in $B \cap \tau$ for any $\tau \in \sigma^{(n-1)}$. When $\lambda$ approaches $\partial B \cap \partial \sigma$, the maps $\varphi_\lambda$ and $\overline\varphi_\lambda$ are the identity. Consequently, the piecewise smooth family $\{\overline\varphi_\lambda\}_{\lambda\in B \cap \partial \sigma}$ corresponds to a homeomorphism $\overline{\varphi}$ of $B \cap \sigma$, smooth on the traces of the simplices of $\mathcal{S}\sigma$ in $B$ and which satisfies the other desired properties. 
\end{proof}

\section{Uniqueness for triangulations of volume forms}

Let us assume that we have two triangulations $g : |K| \to M$ and $h : |L| \to M$ of a smooth volume form $\Omega$ on a $n$-dimensional manifold $M$. We know by Whitehead's Theorem that there exists a PL equivalence $\varphi : |{K}| \to |{L}|$. Of course $\varphi$ does not necessarily satisfy $\varphi^*\Omega_L = \Omega_K$, for $\Omega_K = g^*\Omega$ and $\Omega_L = h^*\Omega$. (The map $\varphi$ is, in fact, a composition $(g')^{-1} \circ h'$ for maps $g'$ and $h'$ approximating $g$ and $h$ respectively). Nevertheless $\varphi^*\Omega_L$ is another piecewise constant volume form on $K$ and its total volume agrees with that of $\Omega_K$. We explain below how to construct a PL homeomorphism $\psi : K_1 \to K_2$ between two subdivisions of $K$ for which $\psi^*(\varphi^*\Omega_L) = \Omega_K$. This can be considered as a PL version of Moser's argument for volume forms. 

\begin{thm}\label{PLMoser} Any two PC volume forms $\Om_1$ and $\Om_2$ on a closed oriented PL manifold $P$, with identical total volume, are PL equivalent.
\end{thm}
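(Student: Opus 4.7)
The plan is to mimic Moser's argument in the PL category, via explicit combinatorial redistribution moves between adjacent $n$-simplices rather than via the flow of a vector field.

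First, I would pick triangulations $K_1$ and $K_2$ of $P$ on which $\Om_1$, respectively $\Om_2$, are piecewise constant, and pass to a common subdivision $K$ on which both are PC. The problem then becomes: find subdivisions $K'$, $K''$ of $K$ and a PL homeomorphism $\psi : K' \to K''$ with $\psi^*\Om_2 = \Om_1$. I would reduce this to matching their volume cocycles on a common subdivision: if two PC forms on the same simplicial complex $L$ have the same volume cocycle, then on each $n$-simplex of $L$ both are constant top-forms with identical integrals, hence identical as forms. So it suffices to arrange that $\psi^*\Om_2$ has the same volume cocycle as $\Om_1$ on $K'$.

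The engine of the construction is an elementary redistribution move. Given two top simplices $\sigma_1, \sigma_2 \in K^{(n)}$ sharing an $(n-1)$-face $\tau$ (and, as $P$ is a closed PL manifold, these are the only two top simplices containing $\tau$), I start by performing the stellar subdivision of $\tau$, introducing a new vertex $c = b_\tau$. I then define a PL map that fixes every other vertex, displaces $c$ to a nearby point $c' \in \Int(\sigma_1 \cup \sigma_2)$, and extends affinely on each $n$-simplex of the star of $c$. After further refining both source and target along the intersections of the translated cones $\langle c', \ldots\rangle$ with the interface $\tau$ (adding vertices where these cones cross $\tau$), this becomes an honest PL homeomorphism between two subdivisions of $K$, supported in $\sigma_1 \cup \sigma_2$, under which the pullback of $\Om_2$ is again piecewise constant on the refined source. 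A direct affine Jacobian computation shows that the effect on the volume cocycle is a transfer between $\sigma_1$ and $\sigma_2$ that depends linearly on the displacement $c' - c$, preserves their combined volume, and can be made arbitrarily small.

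To finish, I would iterate. Working component by component I may assume $P$ is connected, so the dual graph on $K^{(n)}$ whose edges are codimension-one face incidences is connected; the difference of the two volume cocycles, lying in the hyperplane of zero-sum vectors, decomposes as a finite sum of elementary exchanges between pairs of adjacent top simplices. Choosing each exchange small enough that the running cocycle remains strictly positive (possible because the set of strictly positive cocycles with fixed total volume is a convex open subset), I realize each exchange by the elementary move above and compose the resulting PL homeomorphisms, refining the sources as needed at each composition so that the compositions are well-defined PL maps. The principal difficulty I anticipate is the elementary move itself: checking that the displaced vertex still produces a valid triangulation of $\sigma_1 \cup \sigma_2$, controlling the refinement along $\tau$ so that the pullback remains piecewise constant, and verifying that the induced volume transfer is exactly the expected linear function of $c' - c$. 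The rest of the argument is essentially bookkeeping.
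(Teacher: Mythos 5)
Your global strategy---reduce to matching volume cocycles, then realize the discrepancy by a finite sequence of local volume transfers across shared codimension-one faces, using connectivity of the dual graph and keeping the running cocycle positive---is the same as the paper's, and that part of the bookkeeping is fine. The gap is in the elementary move, which is precisely where the paper has to work. Displacing the barycenter $c$ of the shared face $\tau$ to a nearby $c'$ and extending affinely produces, after the refinement you describe, a pullback that is piecewise constant only on the \emph{refined} complex, not constant on $\sigma_1$ and on $\sigma_2$ as whole simplices. Already for two triangles sharing an edge $\tau$, a cone $\langle c,\gamma,p_2\rangle\subset\sigma_2$ is mapped affinely onto a simplex straddling $\tau$, so the pullback on it takes the two distinct constant values $J\cdot\Om_2|_{\sigma_1}$ and $J\cdot\Om_2|_{\sigma_2}$ on the two preimage pieces. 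This breaks your own reduction: equality of volume cocycles implies equality of forms only when both forms are constant on each top simplex of the \emph{same} complex, so at the end you would need $\Psi^*\Om_2$ and $\Om_1$ to have equal cocycles on the final fine refinement $K'$, whereas your moves only control the integrals over the original top simplices of $K$. The mismatch \emph{inside} each original simplex is never addressed and reproduces the original problem at every finer scale, with the number of simplices growing at each step.

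The missing ingredient is the content of the paper's Lemma~\ref{transfer}: the subdivision and the PL map must be chosen so that the transferred form is \emph{constant on each of $\sigma$ and $\tau$ as whole simplices} (condition (ii) there), so that after each transfer the form is still PC on the original $K$ and the iteration closes up in at most $|K^{(n)}|$ steps. This forces a more elaborate local model than a single displaced barycenter: the paper introduces four auxiliary vertices $v,w,u_\sigma,u_\tau$ and pins them down by explicit affine conditions (e.g.\ $V_{\Om_2}(\langle u_\tau,\nu_i\rangle)=V_{\Om_2}(\tau)/n$ for all $(n-1)$-faces $\nu_i\neq\theta$ of $\tau$, and the analogous equalities for the image cones and for the pieces of $\langle v,\theta\rangle$), which is a system of independent hyperplane conditions with a unique solution. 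A single free point $c'$ does not carry enough degrees of freedom to satisfy all these constancy constraints simultaneously, so the defect in your move is structural rather than a matter of unverified details.
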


Here is the idea of the proof. Let $n$ denote the dimension of $K$. Let us also consider the difference of volume cocycles 
$$\D_{\Om_1, \Om_2} = \D : K^{(n)} \to \R : \sigma \mapsto \int_\sigma (\Om_2 - \Om_1) = V_{\Om_2}(\sigma) - V_{\Om_1}(\sigma).$$ 
The idea is to remove the "excess of volume" from simplices $\sigma$ for which $\D(\sigma) >0$ by transfering it into simplices with a "deficit of volume" (a negative $\D$). Here \emph{transfering} really means pulling-back $\Om_2$ via a PL homeomorphism between two (perhaps distinct) subdivisions of $K$. This is done by choosing a path of adjacent simplices between a simplex $\sigma_{\rm max}$ with maximal $\D$ to another one, $\sigma_{\rm min}$, with minimal $\D$ and then transfering the excedent of volume from $\sigma_{\rm max}$ to the next simplex and repeating until reaching $\sigma_{\rm min}$, without changing the volume of intermediary simplices. In the end, the simplex $\sigma_{\rm max}$ has same $\Om_1$-volume and $\Om_2$-volume and $\sigma_{\rm min}$ has reduced its deficit, perhaps to the point of having an excess but the later is lower than the original one of $\sigma_{\rm max}$. We may repeat this process until having suppressed all excess in volume. This yields a sequence $\psi_1, ..., \psi_k$ of PL self-equivalences of $K$ that we may compose to obtain the desired $\psi$. (It is indeed a standard fact that the collection of PL manifolds, together with PL maps form a category.) \\

We will use the following subdivision of a pair of adjacent simplices $\sigma$ and $\tau$. Add to the collection of vertices $\sigma^{(0)} \cup \tau^{(0)}$ a vertex $v$ inside $\sigma$ and a vertex $u$ inside $\sigma \cap \tau$ (not necessarily barycenters) and consider the stellar subdivision $\ES (\sigma \cup \tau)$ centered at $u$. It induces a \emph{conical} subdivision of $\tau$ obtained by considering the simplices generated by all possible collection of vertices in $\tau^{(0)} \cup \{u\}$, except $\sigma \cap \tau$. For $\sigma$, we first consider its stellar subdivision $\ES \sigma$ centered at $v$ and replace the $n$-simplex $<v, \sigma \cap \tau>$ by the conical subdivision associated to $\ES (\sigma \cap \tau)$. Denote that new simplicial complex by $\mathcal S_{v, u}(\sigma, \tau)$ and the induced subdivision of $K$ by $\mathcal S_{v, w}(K)$.


\begin{lemma}\label{transfer} [Transfering volume between adjacent simplices.] Let $\sigma, \tau \in K^{(n)}$ be two adjacent $n$-simplices of $K$ and let $0 < A < \int_\tau\Om_2$. Then for some choice of $v \in {\rm Int} \sigma$, $w \in {\rm Int} \tau$, $u_\sigma, u_\tau \in {\rm Int}(\sigma \cup \tau)$, the PL map 
$$\psi : {\mathcal S}_{v, u_\tau}(\sigma, \tau) \to {\mathcal S}_{w, u_\sigma}(\tau, \sigma)$$ 
which maps $v$ to $u_\sigma$, $u_\tau$ to $w$ and fixes the other vertices satisfies the following conditions~:
\begin{enumerate} 
\item[(i)] $\psi = \rm Id$ on $|\partial (\sigma \cup \tau)|$, 
\item[(ii)] $(\psi^*\Om_2)|_{\tau}$ and $(\psi^*\Om_2)|_{\sigma}$ are constant,
\item[(iii)] $\int_\tau\psi^*\Om_2 = A$ (and $\int_\sigma\psi^*\Om_2 = V_{\Om_2}(\sigma) + V_{\Om_2}(\tau) - A$).
\end{enumerate}
\end{lemma}


\begin{proof} Let us denote by $\theta$ the $(n-1)$-simplex $\sigma\cap \tau$. We introduce the notation $\a_\sigma$ for the affine plane spanned by a simplex $\sigma \subset \R^N$.\\

We first discuss how to obtain $A$ as total volume of $(\psi^*\Om_2)|_{\tau}$. The PL map $\psi$ removes the $\Om_2$-volume carried by $<w, \theta>$ from $\sigma$ and transfer it to $<v, \theta>$. Adjusting $w$ allows to choose the size of $V_{\Om_2}(<w, \theta>)$. The relation to be satisfied is 
$$V_{\Om_2}(<w, \theta>) = V_{\Om_2}(\sigma) - A.$$ 
It determines a $(n-1)$-simplex $\theta_A^\tau$ in $\tau$, parallel to $\theta$, 
along which to choose $w$.\\

Concerning the condition for $(\psi^*\Om_2)|_{\tau}$ to be constant, observe that, with respect to the subdivision ${\mathcal S}_{v, u_\tau}(\sigma, \tau)$, the simplex $\tau$ is divided into $n$ $n$-simplices~: the $<u_\tau, \nu_i>$'s, where $\nu_1, ..., \nu_n$ are the $(n-1)$-faces of $\tau$ distinct from $\theta$. Set  $\nu^*_i = <u_\tau, \nu_i>$. Of course, $\psi(\nu_i^*) = <w, \nu_i>$. To have $(\psi^*\Om_2)|_{\tau}$ constant, the ratios $V_{\Om_2}(\psi(\nu_i^*)) / V_{\Om_2}(\nu_i^*)$ must all be equal (to $A/V_{\Om_2}(\sigma)$). \\

To simplify the discussion, observe that we may even choose $u_\tau$ and $w$ so that all $V_{\Om_2}(\nu_i^*)$ are equal and idem for the $V_{\Om_2}(\psi(\nu_i^*))$'s. Indeed, for $u_\tau$, this condition amounts to a system of $n-1$ independant equations $V_{\Om_2}(<u_\tau, \nu_i>) = V_{\Om_2}(\tau)/n$, $i = 1, ..., n-1$ on $u_\tau$ (since the relation $V_{\Om_2}(<u_\tau, \nu_n>) = V_{\Om_2}(\tau)/n$ is redundant), each of which determines a hyperplane in $\a_{\tau}$ parallel to $\a_{\nu_i}$ and situated between $\a_{\nu_i}$ and its parallel copy passing through the vertex opposite to $\nu_i$ in $\tau$. They all intersect into a unique point in $\theta$. The discussion for $w$ is almost identical. Indeed, the relations to be satisfied by $w$ are~: $V_{\Om_2}(<w, \nu_i>) = A/n$ for $i = 1, ..., n-1$, which determine a unique point in $\theta_A^\tau$. \\

The next task is to ensure that $\psi^*\Om_2$ is constant on $\sigma$. Here too, it is possible to choose $v$ and $u_\sigma$ so that the $\psi^*(\Om_2)$-volume of $<v, \theta>$ agrees with that of the simplices $<v, \mu_j>$ for $\mu_1, ..., \mu_n$, the collection of elements in $\sigma^{(n-1)} \setminus \{\theta\}$. This is very similar to the previous discussion and amounts to choosing $v$ in the intersection of the $(n-1)$-simplex $\theta_A^\sigma \subset \sigma$ parallel to $\theta$ determined by the relation 
$$V_{\Om_2}(<w, \theta>) = \frac{1}{n} V_{\Om_2}(\sigma)$$
with the $n-1$ independent affine hyperplanes 
$$V_{\Om_2}(<u_\sigma, \mu_i>) = V_{\Om_2}(\tau) - A.$$ 

There remain one thing to do~: to ensure that $\psi^*\Om_2$ is constant on $<v, \theta>$. The latter simplex is cut into $n$ $n$-simplices~: the $<v, u_\tau, \gamma_k>$, with $\{\gamma_1, ..., \gamma_n\} = \theta^{(n-2)}$. Of course $V_{\psi^*\Om_2}(<v, u_\tau, \gamma_k>) = V_{\Om_2}(\psi(<v, u_\tau, \gamma_k>)) = V_{\Om_2}(<u_\sigma, w, \gamma_k>)$. Observe that our choice for $u_\sigma$ (\rp $u_\tau$) guarantees that the $\Om_2$-volume of the various $<u_\sigma, w, \gamma_k>$'s (\rp $<v, u_\tau, \gamma_k>$'s) are identical.
\end{proof}

{\it Proof of \tref{PLMoser}} There are many ways to proceed. We describe one that requires few words but a lot of steps. There are more ``economical procedures".\\

Observe that because $K$ is finite, the difference cocycle $\D$ is necessarily bounded and let $\sigma_{\rm max}$ denote a $n$-simplex in $K$ for which $\D(\sigma)$ is maximal and another one, $\sigma_{\rm min}$, for which $\D(\sigma_{\rm min})$ is minimal. Then find a path of adjacent $n$-simplices starting at $\sigma_{\rm max}$ and ending at $\sigma_{\rm min}$, say $\sigma_0 = \sigma_{\rm max}, \sigma_1, ..., \sigma_k = \sigma_{\rm min}$ and use \pref{transfer} to find $v \in \sigma_1$, $w \in \sigma_0$, $u_{\sigma_0}, u_{\sigma_1} \in \sigma_0 \cap \sigma_1$ such that $\psi_0 : {\mathcal S}_{v, u_{\sigma_1}}(K) \to {\mathcal S}_{w, u_{\sigma_0}}(K)$ achieves $V_{{\psi_0}^*\Om_2}(\sigma_0) = V_{\Om_1}(\sigma_0)$. \\

We redo this transfer with $\sigma_0$ (\rp $\sigma_1$) replaced by $\sigma_1$ (\rp $\sigma_2$), constructing $\psi_1$ such that $V_{\psi_1^* (\psi_0^*\Om_2)}(\sigma_1) = V_{\Om_2}(\sigma_1)$. Now the excess of $\Om_2$-volume of $\sigma_0$ has been transferred, via $\psi_0 \circ \psi_1$, to $\sigma_2$, without modifying the volume of the intermediary simplex $\sigma_1$. We repeat this step until having transferred $\D(\sigma_0)$ of the initial volume of $\sigma_0$ to $\sigma_k$ via a composition $\Psi = \psi_0 \circ ...\circ \psi_k$ of PL self-equivalences of $K$.\\

Observe that $\Om_2' = \Psi^*\Om_2$ is piecewise constant on $K$, the $\Om'_2$-volume of $\sigma_{\rm max}$ agrees with its $\Om_1$-volume and the $\Om'_2$-volume of $\sigma_{\rm min}$ is $V_{\Om_2}(\sigma_{\rm min}) + \D(\sigma_{\rm max})$, which means that $\D_{\Om'_2, \Om_1}(\sigma_{\rm min}) < \D(\sigma_{\rm max})$. We now replace $\Om_2$ by $\Om'_2$ and repeat from the beginning. This will not damage what has been achieved so far, since even if our new path includes $\sigma_{\rm max}$ as an intermediary simplex, its new volume will remain identical.
Since we do not increase the total number of simplices, after finitely many steps (at most the cardinality of $K^{(n)}$), we have equally distributed the volume of $\Om_2$ with respect~to~$\Om_1$.\hfill $\square$

\section{Several Consequences}

Let us now describe a few consequences of the previous results.

\begin{prop}\label{existence}
Consider a smooth manifold $M$ equipped with a smooth volume form $\Omega$. Let $h : |K|\to M$ be a smooth triangulation and let $\Omega_1 = {\mathcal C}_K(h^* \Omega)$. Then there exists a subdivision $\overline{K}$ of $K$ and a triangulation $\overline h : |\overline K| \to M$ such that $\overline{h}^*\Omega = \Omega_1$.
\end{prop}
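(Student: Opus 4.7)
The proof assembles the pieces developed in Subsections \ref{mod2} and \ref{interpolation} into a finite induction on the dimension of the skeleton, capped by a relative Moser argument on each top-dimensional simplex.

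First I would invoke Proposition \ref{conclusion1} to replace $h$ by a smooth triangulation $h_0 : |K_0| \to M$ defined on a refinement $K_0$ of $K$ (namely $K_0 = \mathcal{S}K$), such that $h_0^*\Omega$ and $\Omega_1$ share the same volume cocycle on $K_0$ and coincide on a neighborhood of the $0$-skeleton $|K_0^{(0)}|$. Note that $\Omega_1 = \mathcal{C}_K(h^*\Omega)$ is piecewise constant on the $n$-simplices of $K$, hence restricts with the same constant value to the top-dimensional simplices of any refinement; the comparison with $h_k^*\Omega$ on successive refinements therefore remains meaningful throughout the induction.

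Next I would iterate Proposition \ref{conclusion2} for $k = 1, 2, \ldots, n-1$. At step $k$, starting from a triangulation $h_{k-1} : |K_{k-1}| \to M$ for which $h_{k-1}^*\Omega$ and $\Omega_1$ have the same volume cocycle on $K_{k-1}$ and agree on a neighborhood of $|K_{k-1}^{(k-1)}|$, the proposition produces $K_k = \mathcal{S} K_{k-1}$ together with a triangulation $h_k : |K_k| \to M$ such that $h_k^*\Omega$ and $\Omega_1$ have the same volume cocycle on $K_k$ and agree on a neighborhood of $|K_k^{(k)}|$. After $n-1$ iterations, I arrive at a triangulation $h_{n-1} : |K_{n-1}| \to M$ whose pullback $h_{n-1}^*\Omega$ agrees with $\Omega_1$ on a neighborhood of the codimension-one skeleton $|K_{n-1}^{(n-1)}|$, equivalently on a neighborhood of $\partial \sigma$ inside each top-dimensional simplex $\sigma \in K_{n-1}^{(n)}$.

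Finally I would apply relative Moser simplex by simplex. For each $\sigma \in K_{n-1}^{(n)}$, the smooth volume forms $h_{n-1}^*\Omega|_\sigma$ and $\Omega_1|_\sigma$ agree on $\mathcal{O}p(\partial \sigma)$ and have equal total volume on $\sigma$, so Proposition \ref{mo} yields a diffeomorphism $\psi_\sigma$ of $\sigma$ with $\psi_\sigma|_{\mathcal{O}p(\partial \sigma)} = \mathrm{Id}$ and $\psi_\sigma^*\bigl(h_{n-1}^*\Omega|_\sigma\bigr) = \Omega_1|_\sigma$. Since each $\psi_\sigma$ is the identity near $\partial \sigma$, the family $(\psi_\sigma)_{\sigma \in K_{n-1}^{(n)}}$ glues into a PS homeomorphism $\psi$ of $K_{n-1}$. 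Setting $\overline{K} = K_{n-1}$ and $\overline{h} = h_{n-1} \circ \psi$, I obtain $\overline{h}^*\Omega = \psi^*(h_{n-1}^*\Omega) = \Omega_1$, as required.

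The main obstacle here is purely bookkeeping: the conceptual heavy lifting is carried out in the lemmas of Subsection \ref{mod2}, and the role of this proposition is simply to package the induction. The one point that deserves explicit verification is that the hypothesis of Proposition \ref{conclusion2} — agreement of volume cocycles at each stage — is preserved under the successive stellar subdivisions, which follows from the fact that $\Omega_1$ restricts coherently to any refinement without altering simplex-wise integrals, and that at each step the new $h_k^*\Omega$ is produced with volume cocycle equal to that of $\Omega_1$ on $K_k$.
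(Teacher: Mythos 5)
Your proof is correct and follows essentially the same route as the paper: Proposition \ref{conclusion1} to handle the vertices, then Proposition \ref{conclusion2} iterated for $k = 1, \ldots, n-1$, and finally the relative Moser argument of Proposition \ref{mo} applied inside each top-dimensional simplex, with the resulting diffeomorphisms gluing along the skeleton because they are the identity near the boundaries. Your added remark that $\Omega_1$ restricts coherently to each refinement, so that the volume-cocycle hypothesis persists through the induction, is a sensible bookkeeping check that the paper leaves implicit.
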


\begin{proof}
By Proposition \ref{conclusion1} and Proposition \ref{conclusion2} applied successively for $k = 1, ..., n-1$ , there exists a subdivision $\ol{K}$ of $K$ (the result of $n+1$ successive stellar subdivisions) and a smooth triangulation $h_o : |\overline K| \to M$ for which $h_o^*\Omega$ and $\Omega_1$ have the same volume cocycle on $\overline{K}$ and coincide on a neighborhood of $|\overline{K}^{(n-1)}|$. Now Proposition \ref{mo} can be applied in each $n$-simplex of $\overline{K}$ to provide us with a piecewise smooth homeomorphism $\varphi$ of $\overline{K}$ supported away from the skeleton of $\overline{K}$ and such that $\varphi^*(h_o^*\Omega)=\Omega_1$. The triangulation $\overline{h} = h_o \circ \varphi : |\overline{K}|\to M$ satisfies $\overline{h}^*\Omega = \Omega_1$.
\end{proof}

Consider two volume forms on a closed manifold $M$, one is smooth and the other one is piecewise smooth with respect to some triangulation of $M$. Assume only that they have the same total volume. Then it is possible to construct an equivalence between them.

\begin{cor}[Moser PC -- $C^{\infty}$]\label{smoothmo}
Let $(M, \Omega)$ be a closed manifold equipped with a smooth volume form. Consider a triangulation $h : |K| \to M$ and a piecewise constant volume form $\Omega_1$ on $K$. Assume that 
$$\int_M \Omega = \int_K \Omega_1.$$
Then there exists a subdivision $\overline{K}$ of $K$ and a triangulation $\overline{h} : |\overline{K}| \to M$ such that $\overline{h}^*\Omega = \Omega_1$.
\end{cor}

\begin{proof}
Observe first that we can choose a positive smooth function $f$ on $M$ such that
$$
\int_{h(\sigma)} f \Omega = \int_{\sigma} \Omega_1
$$
for all $\sigma \in K^{(n)}$. The smooth volume forms $\Omega$ and $f\Omega$ lie in the same cohomology class since they have the same total volume.
Thus the standard Moser argument implies that there exists a diffeomorphism $\rho$ of $M$ such that $\rho^*\Omega = f\Omega$. Now the volume cocycles of $f\Omega$ and $\Omega_1$ coincide so that Proposition~\ref{existence} yields a subdivision $\overline{K}$ of $K$ and a triangulation $h_o : |\overline{K}| \to M$ such that $h_o^*(f\Omega) = \Omega_1$. The composition $\ol{h} = \rho \circ h_o$ is a triangulation of $M$ for which
$$
\ol{h}^* \Omega = h_o^*(\rho^*\Omega) = h_o^*(f\Omega) = \Omega_1.$$
\end{proof}

\begin{cor}\label{smPC}[Smoothing of a piecewise constant volume form]
Let $P$ be a closed PL manifold equipped with a piecewise constant volume form $\Omega_P$. Assume that $P$ admits a smoothing, that is, there exists a smooth manifold $M$, and a triangulation $h : P \to M$. Then there exists a smooth volume form $\Omega$ on $M$, and a triangulation $\overline{h} : P \to M$ such that $\overline{h}^*\Omega = \Omega_P$.
\end{cor}

\begin{proof}
Consider a smooth volume form $\Omega$ on $M$ such that $(M, \Omega)$ and $(P, \Omega_P)$ have the same total volume (notice that $P$ and $M$ are necessarily orientable). We may even assume that $\Omega$ and $\Omega_P$ have the same volume cocycle and therefore invoke Proposition \ref{existence} to conclude.
\end{proof}

\begin{rem} The techniques described in the previous section allow also to triangulated a PS volume form $\Omega_P$ on a PL manifold $P$, that is find a PS map $\varphi : P \to P$ such that $\varphi^*\Omega_P$ is piecewise constant, so that \cref{smoothmo} and \cref{smPC} can be stated for piecewise smooth volume forms as well.
\end{rem}

\section{Symplectic jiggling lemma}\label{section jiggling}

We state and prove here a symplectic version of Thurston's jiggling lemma.

\subsection{Discussion of the statement}

Let us first recall the precise statement of Thurston's jiggling Lemma (see \cite{thurstonjig}). Given a $C^0$ plane field $\tau$ on a manifold $M$, a Whitehead triangulation $h : |K| \to M$ and a compact subset $V$ of $M$, there exists natural number $\ell$ and a slight deformation $h'$ of the triangulation $h : \mathcal{S}_\ell(K) \to M$ which is in general position with respect to $\tau$ on $V$, where \textsl{being in general position} means not only that all the simplices of $\mathcal{S}_\ell(K)$ are transverse to $\tau$, but also that for any point $x$ in the interior of $\sigma \in \mathcal{S}_\ell(K)$, the plane $\tau_x$ is transverse to all the faces of $\sigma$. 

\medskip

In the presence of a symplectic form rather than a distribution, the notion of \textsl{general position} and the statement of the \textsl{jiggling Lemma} are adapted as follows.

\begin{df}\label{genpos}
A simplicial complex $K$ is said to be \textsl{in general position} with respect to a symplectic Whitney $2$-form $\omega$ on $K$ if for any $x$ in a simplex $\sigma \in K$ and for any face $\tau$ of $\sigma$, $\omega_x$ has maximal rank along $\tau$. If $M$ is a manifold equipped with a smooth symplectic form $\omega_o$, a Whitehead triangulation $h : |K| \to M$ is \textsl{in general position with respect to $\omega_o$} if $K$ is in general position with respect to $h^*\omega_o$.
\end{df}

\begin{lemma}[Symplectic jiggling lemma]\label{sympjig1} 
Any closed symplectic manifold $(M, \om)$ admits Whitehead triangulations that are in general position with respect to $\om$. 
\end{lemma}

To begin, the manifold $M$ is smoothly embedded in $\R^k$ for a certain $k > 2n$. If $\pi : N \subseteq \R^k \to M$ is a tubular neighborhood of $M$, there exists a PL manifold $K$ contained in $N$ and transverse to $\pi$ such that $\pi|_{|K|} : |K| \to M$ is a Whitehead triangulation of $M$ (cf. \cite{git}, Chapter IV, B). Notice that we do not require that $\omega$ is induced from the standard (or any other) symplectic structure on $\R^k$. Set $\Omega = \pi^*\om$.\\

We now say a few words about the notion of jiggling. We suppose here and later that $K$ has perhaps been replaced by $\mathcal S_n(K)$ and that it contains the model simplices for its crystalline subdivisions. Fix a $\delta >0$ such that for each simplex $\sigma=\langle v_{i_0}, \ldots, v_{i_n} \rangle$ in $K$, any choice of points $v_{i_0}', \ldots, v_{i_n}'$, with $v_{i_j}'$ is in the ball of radius $\delta$ centered at $v_{i_j}$, determines a non-degenerate $n$-simplex. We call such a simplex a \textsl{$\delta$-jiggling of $\sigma$}. Given a $n$-simplex $\mu$ of $\mathcal{S}_\ell(K)$, a $\delta$-jiggling of the corresponding model simplex induces a $\frac{\delta}{\ell}$-jiggling of $\mu$, and vice-versa.
Therefore, if $x_0, \ldots, x_m$ are the vertices of $\mathcal{S}_\ell(K)$, each choice of points
$$
x_0' \in B\left(x_0, \frac{\delta}{\ell}\right), \ldots, x_m' \in B\left(x_m, \frac{\delta}{\ell}\right)
$$ 
determines a new simplicial complex $\mathcal{S}'_\ell(K)$, called a \textsl{jiggling} or $\frac{\delta}{\ell}$-\textsl{jiggling} of $\mathcal{S}_\ell(K)$ which is PL homeomorphic to $\mathcal{S}_\ell(K)$. The terminology \textsl{closed jiggling} will be used to indicate that the vertices move in closed balls. In addition, the positive number $\delta$ can be chosen sufficiently small for any $\frac{\delta}{\ell}$-jiggling of $\mathcal{S}_\ell(K)$ to remain transverse to $\pi$. Such a jiggling is called hereafter an \textsl{admissible jiggling}. \\

Then \lref{sympjig1} is a consequence of the next result.

\begin{lemma}\label{sympjig}
There exists a crystalline subdivision $\mathcal{S}_\ell(K)$ of $K$ and an admissible jiggling $\mathcal{S}'_\ell(K)$ of $\mathcal{S}_\ell(K)$ such that $\pi : |\mathcal{S}'_\ell(K)| \to M$ is in general position with respect to~$\omega$.
\end{lemma}

As an oversimplified example, consider a linear simplex $\sigma$ of dimension two in the standard symplectic space $(\R^{2n},\omega_0)$. The restriction of $\omega_0$ may be degenerate along $\sigma$, but it is easy to see that there exists arbitrary small jigglings of $\sigma$ along which $\omega_0$ has maximal rank. If $\sigma = \langle v_0, v_1, v_2 \rangle$, it is indeed sufficient to move $v_2$ away from the affine hyperplane
$$
S=\{x \in \R^{2n} : \omega_0(x-v_0,v_1-v_0)=0\}.
$$

More generally, fix a $q$-simplex $\sigma \in K$ and observe that, except for $q=1, 2n-1$ or $2n$, the form $\pi^*\omega$ does not necessarily have maximal rank along $\sigma$. Moreover, since the form $\pi^*\omega$ varies within $\sigma$, it is less trivial than in the previous example that $\sigma$ admits a jiggling $\sigma'$ such that $(\pi^*\omega)_x$ has maximal rank along $\sigma'$ for any $x \in \sigma'$. This suggests that we should first subdivide $\sigma$ into sufficiently small simplices to control the variation of the smooth form $\pi^*\omega$ within a simplex. On the other hand, it is necessary to take into account the fact that small simplices have smaller amplitude jigglings. The difficulty is to find a balance between those two facts.

\subsection{Crystalline subdivision of a simplicial complex}\label{crystalline}

We recall here the notion crystalline subdivision, as well as some of its properties that are necessary for our purpose.

\medskip

Consider a finite simplicial complex $K \subseteq \R^k$, and choose an ordering of the vertices $v_0, v_1, \ldots, v_p$. Any $n$-simplex $\sigma$ of $K$ is uniquely generated by vertices $v_{i_0}, v_{i_1}, \ldots, v_{i_n}$, with $0 \leq i_0 < \ldots < i_n \leq p$. Observe that the linear map $f$ defined by the conditions $f(v_{i_0})=(1,1,\ldots,1)$, $f(v_{i_1})=(0,1,\ldots,1)$, $\ldots$, $f(v_{i_n})=(0,0, \dots, 0)$, sends $\sigma$ into the $n$-cube
$$
C=\{(x_1, \ldots, x_n) : 0 \leq x_i \leq 1 ~~\mathrm{for ~any~~} i\in\{1,\ldots,n\}\} \subseteq \R^n.
$$ 
The later can be subdivided into $n!$ simplices~: for any permutation $\tau$ of $n$ elements, consider the $n$-simplex defined by
$$
s_\tau = \{(x_1, \ldots, x_n) : 0 \leq x_{\tau(1)} \leq x_{\tau(2)} \leq \ldots \leq x_{\tau(n)} \leq 1\}.
$$
Notice that $f(\sigma)=s_{\mathrm{Id}}$.

Now, for any $\ell \in \N_0$, one may subdivide $C$ into $\ell^n$ isometric cubes and decompose each of them into $n$-simplices as above. This induces a subdivision of $\sigma$, called its \textsl{crystalline subdivision of order $\ell$}.  Moreover, the crystalline subdivisions of all the $n$-simplices of $K$ fit together nicely since the induced subdivision on each face is the crystalline subdivision of the face and yield the \textsl{crystalline subdivision} of order $\ell$ of $K$, denoted $\mathcal{S}_\ell(K)$. \\

\begin{figure}[htb]
  \centering
  \def\svgwidth{180pt}
  \input{sub.pdf_tex}
\caption{Crystalline subdivision of order two of a $2$-simplex (in grey).}\label{cryst}
\end{figure}

As illustrated on Figure \ref{cryst}, the crystalline subdivision of a 2-simplex $\sigma$ contains two types of simplices, up to translations, which correspond to the two simplices of the standard subdivision of the $2$-cube. Therefore, given a complex $K$ of dimension two, the complex $\mathcal{S}_2(K)$ has the property that if we consider any of its crystalline subdivisions, the new simplices of dimension two are all obtained from those of $\mathcal{S}_2(K)$ by a contraction followed by a translation. For a complex $K$ of dimension $n$, it is the crystalline subdivision of order $n$ or higher that enjoys this property.

\begin{lemma}\label{crystalline property}
Let $K$ be a combinatorial manifold of dimension $n$. For any $\ell \in \N_0$, the crystalline subdivision $\mathcal{S}_\ell(K)$ has the property that all its simplices are obtained from the simplices of $\mathcal{S}_n(K)$ by a homothety of factor $\frac{\ell}{n}$ followed by a translation.
\end{lemma}

\begin{proof}
Fix a $n$-simplex $\sigma$ of $K$. The linear map $f$ defined previously sends $\sigma$ onto the $n$-simplex
$$
f(\sigma)=\{(x_1,\ldots,x_n) \in \R^n : 0 \leq x_1 \leq \ldots \leq x_n \leq 1 \} \subseteq C,
$$
which contains one of the cubes of the subdivision of $C$ into $n^n$ cubes, namely~:  
$$
\left[0,\frac{1}{n}\right] \times \left[\frac{1}{n},\frac{2}{n}\right] \times \ldots \times \left[ \frac{n-1}{n},1\right].
$$
Consequently, the crystalline subdivision of order $n$ of $\sigma$ contains all the simplices corresponding to the standard subdivision of this little cube. 
\end{proof}

Therefore, replacing $K$ by $\mathcal{S}_n(K)$ if necessary, we may assume the following properties~:

\begin{enumerate}
\item[$\bullet$] For any $\ell > 1$, each simplex $\sigma_o$ of $\mathcal{S}_\ell(K)$ is obtained from a simplex $\sigma$ of $K$ by a contraction of factor $\frac{1}{\ell}$ followed by a translation. We may even choose $\sigma$ so that $\sigma_o \subset \sigma$. The simplices of $K$ are called \textsl{model simplices}. 
\item[$\bullet$] The crystalline subdivision of a simplex does not increase the number of simplices incident to an initial vertex. It follows that there is a uniform bound $A$, independent on $\ell$, on the number of simplices in the link of a vertex $v$ of $\mathcal{S}_\ell(K)$. \\
\end{enumerate}


\subsection{The proof of the jiggling lemma}

We will now give the proof of Lemma \ref{sympjig}. It consists of an induction on the dimension of the skeleton of $K$ which is in general position with respect to $\pi^*\om$. Set $\Omega=\pi^*\omega$. The Lebesgue measure of a measurable subset $S$ of $\R^k$ is denoted hereafter by $m(S)$ and the closed $\delta$-neighborhood of the body of a simplicial complex $L$ is denoted by $N_\delta(L)$. \\

For any $q \in \{1, \ldots, 2n\}$ and any $x \in N$, we define the following subset of the Grassmannian $G_q(\R^k)$ of vector subspaces of dimension $q$ in $\R^k$ 
$$
\mathcal{L}_x^q=\{L \in G_q(\R^k) : \mathrm{the~ rank ~of~} (\pi^*\omega)_x \mathrm{~is~ not~ maximal~ along~} L\}.
$$
Observe that if $q$ is even, the rank of $(\pi^*\omega)_x$ is not maximal along $L \in G_q(\R^k)$ if and only if given a basis $\{e_1,\ldots,e_q\}$ of $L$, the skew-symmetric matrix $\left((\pi^*\omega)_x(e_i,e_j)\right)_{ij}$ has vanishing determinant.
For $q$ odd, the rank of the matrix is not maximal if all the minors of size $q-1$ have vanishing determinant. Thus, in both cases, $\mathcal{L}_x^q$ is a compact subset of $G_q(\R^k)$. \\

Observe that to prove the symplectic jiggling lemma, it suffices to construct an admissible jiggling $\mathcal{S}'_\ell(K)$ of a certain subdivision $\mathcal{S}_\ell(K)$ of $K$, sufficiently small for $\pi : |\mathcal{S}'_\ell(K)| \to M$ to remain a triangulation of $M$ and such that each $q$-simplex $\sigma$ of $\mathcal{S}'_\ell(K)$ generates a plane which is not in any of the $\mathcal{L}_x^q$'s for $x$ running in $\sigma$. \\

To measure the variation of the $\mathcal{L}_x^q$'s within a simplex, we fix a metric $d$ on the Grassmannian $G_q(\R^k)$ and consider the induced \textsl{Hausdorff distance} 
$$
d_{\mathcal{H}}(K_1, K_2) = \max \left\{ \sup_{x_1 \in K_1} d(x_1,K_2), \sup_{x_2 \in K_2} d(x_2,K_1) \right\}
$$
between compact subsets $K_1, K_2$ of $G_q(\R^k)$. Notice in particular that $d_{\mathcal{H}}(K_1,K_2) < \varepsilon$ if and only if $K_1 \subseteq B(K_2,\varepsilon)$ and $K_2 \subseteq B(K_1,\varepsilon)$ and that it makes sense to talk about the Hausdorff distance between $\mathcal{L}_x^q$ and $\mathcal{L}_y^q$ for $x, y$ in $N$. The following Lemma is proven in \cite{D19} (Lemma 4.2.3 p~52).

\begin{lemma}\label{conti}
For any $q \in \{1,\ldots, 2n\}$, the map $
x \mapsto \mathcal{L}_x^q$
is continuous with respect to $d_{\mathcal{H}}$.
\end{lemma}

As a consequence, the Hausdorff distance between the various $\mathcal{L}_x^q$'s for $x$ varying in the star of a simplex $\sigma \in \mathcal S_\ell(K)$ can be made arbitrarily small by increasing $\ell$. 

\begin{prop}\label{jiggling}
Let us assume that, for a certain $q \in \{2, 4, \ldots, 2n-2\}$, our triangulation $\pi|_{|K|} : |K| \to M$ has the property that all the $j$-simplices of $K$ are in general position with respect to $\pi^*\omega$, for $j = 1, ..., q-1$. Then there exists a crystalline subdivision $\mathcal{S}_\ell(K)$ of $K$ and an admissible jiggling $\mathcal{S}'_\ell(K)$ of $\mathcal{S}_\ell(K)$ such that all the $j$-simplices of $\mathcal{S}'_\ell(K)$ are in general position with respect to $\Omega$ for $j = 1, ..., q+1$. 
\end{prop}

\begin{rem}\label{delta}
If, for $j = 1, ..., q-1$, the $j$-simplices of $K$ are in general position with respect to $\Omega$, that is, if, for any $j$-simplex $\tau$ with $j = 2, ..., q-2$, the form $\Omega_u$ is non-degenerate on $\tau$ for all $u$ in $\st_K(\tau)$, then the number $\delta$ can be chosen small enough for the same property to hold for all closed $\delta$-jigglings of $\tau$, for all $u$ in the closed $\delta$-neighborhood of $\st_K(\tau)$ and for all $j$-simplex $\tau$ with $j = 2, ..., q-2$. 

Notice that, for such a $\delta$ and for any crystalline subdivision $\mathcal{S}_\ell(K)$ of $K$, the form $\Omega_u$ is non-degenerate on each $\frac{\delta}{\ell}$-jiggling of a $j$-simplex $\tau$ with $j = 2, ..., q-2$ of $\mathcal{S}_\ell(K)$ for all $u$ in $N_\delta(\st_{\mathcal{S}_\ell(K)}(\tau))$ because the faces created by a crystalline subdivision are necessarily parallel to the original ones and the faces of a $\frac{\delta}{\ell}$-jiggling of $\mathcal{S}_\ell(K)$ are parallel to those of the corresponding $\delta$-jiggling of $K$ (not to forget that the star of a simplex in $\mathcal{S}_\ell(K)$ is contained in the star of its model simplex in $K$ if the later is chosen to contain the former). 

Therefore, for the conclusion of the lemma to hold, it suffices to restrict ourself to such sufficiently small $\delta$'s and to achieve that for each $q$-simplex $\sigma$ of the $\frac{\delta}{\ell}$-jiggled complex $\mathcal{S}'_\ell(K)$, and for each point $u\in \st_K(\sigma)$, the form $\Omega_u$ is non-degenerate on $\sigma$, or, equivalently, the $q$-plane $L_\sigma$ is not contained in $\mathcal{L}_u^q$. 
\end{rem}

Introduce, for each model $q$-simplex $\sigma$, for each vertex $v$ of $\sigma$, for each $\delta$-jiggling $\tau'$ of the $(q-1)$-simplex $\tau$ opposite to $v$ in $\sigma$, for each $u$ in the $\delta$-neighborhood of $\st_K(v)$, and for each $\varepsilon>0$, the ``sector''
$$
S_\varepsilon(\sigma, v, \tau', u) = \Bigl\{ x \in \R^k : L_{\langle x,\tau'\rangle} \in G_q(\R^k) \mbox{ and } d_\mathcal{H}(L_{\langle x,\tau' \rangle}, \mathcal{L}^q_u) < \varepsilon \Bigr\},
$$
and $S_0(\sigma,v,\tau',u) = \{x \in \R^k \;|\; L_{\langle x, \tau' \rangle} \in \mathcal{L}_u^q\}$.
We explain below how to choose a sufficiently small positive number $\varepsilon$ for the ball $B(v,\delta)$ to contain points which do not belong to the sector $S_\varepsilon(\sigma, v, \tau',u)$. Such a point $x$ has the property that the jiggled $q$-plane $L_{\langle x,\tau' \rangle}$ is at distance at least $\varepsilon$ from $\mathcal{L}_u^q$, and therefore does not intersect the $\mathcal{L}^q_{u'}$'s with 
\begin{equation}\label{ineq}
d_{\mathcal{H}}(\mathcal{L}_u^q,\mathcal{L}_{u'}^q) < \varepsilon.
\end{equation}
Consequently, $\varepsilon$ will determine the order $\ell$ of the crystalline subdivision of $K$ that is needed to ensure that the inequality (\ref{ineq}) holds within any simplex of ${\mathcal S}_\ell(K)$. 

\begin{lemma}\label{epsilon}
There exists $\varepsilon > 0$ such that 
$$
m\Bigl(S_{\varepsilon_o}(\sigma, v,\tau', u) \cap B(v,\delta)\Bigr) < \frac{m\left(B(v,\delta)\right)}{A},
$$
for any $\varepsilon_o \leq \varepsilon$, any $\sigma$ in $K^{(q)}$, any $v$ in $\sigma^{(0)}$, any closed $\delta$-jiggling $\tau'$ of $\tau$ and any $u$ in $N_\delta(\st_K(v))$, where $A$ denotes the maximal number of simplices in the link of a vertex in $K$ (whence in any crystalline subdivision of $K$).
\end{lemma}

\begin{proof}
The open set $S_\varepsilon(\sigma, v,\tau', u) \cap B(v,\delta)$ has a finite Lebesgue measure which decreases with $\varepsilon$. It follows that when $\varepsilon$ tends to $0$, the measure of $S_\varepsilon \cap B(v,\delta)$ converges to the measure of $S_0 \cap B(v,\delta)$.
Observe that 
\begin{align*}
S_0(\sigma,v,\tau',u) \cap B(v,\delta) &= \{x \in B(v,\delta) : L_{\langle x ,\tau' \rangle} \in \mathcal{L}_u^q\}\\
&= \{ x \in B(v, \delta) : \mathrm{det}\left(\Omega_u|_{L_{\langle x, \tau' \rangle}}\right)=0\}.
\end{align*}
By the choice of $\delta$ (\rref{delta}), we know that $\Omega_u$ is non-degenerate along the $(q-2)$-faces of the jiggled simplex $\tau'$. We can therefore choose a basis $\{e_1,\ldots,e_{q-1}\}$ of $L_{\tau'}$ such that $e_1$ lies in the kernel of $\Omega_u$ restricted to $L_{\tau'}$ (and the plane spanned by $e_2, \ldots, e_{q-1}$ is therefore symplectic). Complete this basis into a basis $\{e_0, e_1, \ldots, e_{q-1}\}$ of $L_{\langle x,\tau' \rangle}$ by specifying $e_0 = x-y_0$, for some vertex $y_0$ of $\tau'$. Then the matrix of $\Omega_u|_{\langle x, \tau' \rangle}$ can be written as

\begin{align*}
\left(
\begin{array}{c|c}
\begin{array}{cc}
0 &\Omega_u(e_0,e_1)\\
-\Omega_u(e_0,e_1) & 0 
\end{array}
&\begin{array}{cccc}
 \Omega_u(e_0,e_2) & \cdots & \Omega_u(e_0,e_{q-1}) \\
  0 & \cdots & 0 
\end{array}\\
\hline
\begin{array}{cc}
\hspace{.6cm} -\Omega_u(e_0,e_2)\hphantom{0000} &0 \hphantom{0000000}\\
\hspace{.6cm} \vdots \hphantom{0000} & \vdots \hphantom{0000000} \\
\hspace{.6cm} -\Omega_u(e_0,e_{q-1})\hphantom{00}&0 \hphantom{0000000}
\end{array}
 & \Huge{M}
\end{array}
\right),
\end{align*}
where $M$ is a skew-symmetric matrix of size $q-2$ with non-vanishing determinant. 
Observe that
$$
\mathrm{det}\left(\Omega_u|_{L_{\langle x,\tau'\rangle}}\right) = \Bigl(\Omega_u(e_0,e_1)\Bigr)^2 \; \mathrm{det}(M),
$$
which says that a point $x \in B(v,\delta)$ belongs to $S_0(\sigma,v,\tau',u)$ if and only if $\Omega_u(x-y_0,e_1)=0$. The later condition describes a an affine hyperplane of $\R^k$, that is a subset whose Lebesgue measure vanishes. Consequently the function $m\left(S_\varepsilon (\sigma, v,\tau', u) \cap B(v,\delta)\right)$ tends to 0 with $\varepsilon$, and since the choices of $\sigma$, $v$, $\tau'$, $u$ are either finite or range over compact sets (see statement of \lref{epsilon}), we can fix an $\varepsilon >0$ such that 
$$
m\Bigl(S_{\varepsilon_o}(\sigma, v,\tau', u) \cap B(v,\delta) \Bigr) < \frac{m\left(B(v,\delta)\right)}{A},
$$
whoever are $\sigma$, $v$, $\tau'$, $u$ and $\varepsilon_o \leq \varepsilon$. 
\end{proof}

Consider any crystalline subdivision $\mathcal{S}_\ell(K)$ of $K$. Let $\sigma_o$ be a $q$-simplex of $\mathcal{S}_\ell(K)$ and $\sigma$ the corresponding model simplex of $K$. By construction, the simplex $\sigma_o$ is obtained from $\sigma$ by a map $g$ which is a contraction of factor $\frac{1}{\ell}$ followed by a translation and we may assume that $\sigma_o \subset \sigma$. Consider a vertex $v_o$ of $\sigma_o$ and denote by $v$ the corresponding vertex in $\sigma$. If $\tau_o$ is the $(q-1)$-face opposite to $v_o$ in $\sigma_o$, a $\frac{\delta}{\ell}$-jiggling $\tau_o'$ of $\tau_o$ corresponds to a $\delta$-jiggling $\tau'$ of the face $\tau$ opposite to $v$ in $\sigma$.\\

For any $\delta$-jiggling $\tau'$ of $\tau$ and any $u \in N_\delta(\st_K(v))$, the sector $S_\varepsilon=S_\varepsilon(\sigma,v,\tau',u)$, satisfies
$$
\frac{m(S_\varepsilon \cap B(v,\delta))}{m(B(v,\delta))} < \frac{1}{A}.
$$
Clearly, a contraction modifies the measures of $B(v,\delta)$ and $S_\varepsilon \cap B(v,\delta)$, but their ratio is unchanged and since $g(B(v,\delta)) = B(v_o,\frac{\delta}{\ell})$, we have
$$
\frac{m\left(g(S_\varepsilon) \cap B(v_o,\frac{\delta}{\ell})\right)}{m\left(B(v_o,\frac{\delta}{\ell})\right)} < \frac{1}{A}.
$$
Now observe that the $q$-plane $L_{\langle x, \tau' \rangle}$ coincides with $L_{\langle g(x),\tau_o'\rangle}$ in $G_q(\R^k)$, so that
$$
g(S_\varepsilon) \cap B\left(v_0,\frac{\delta}{\ell}\right) = \left\{x_o \in B\left(v_o,\frac{\delta}{\ell}\right) : d_\mathcal{H}(L_{\langle x_o,\tau_o' \rangle}, \mathcal{L}^q_u) \leq \varepsilon \right\},
$$
for any $u \in N_\delta(\st_K(\sigma)) \supset N_{\frac{\delta}{\ell}}(\st_{\mathcal{S}_\ell(K)}(\sigma_o))$. Consequently, for any $\frac{\delta}{\ell}$-jiggling $\tau_o'$ of the face $\tau_o$ opposite to $v_o$ in $\sigma_o$, there exists a point $x_o \in B\left(v_o,\frac{\delta}{\ell}\right)$ such that the plane generated by $x_o$ and $\tau_o'$ is at distance at least $\varepsilon$ from $\mathcal{L}^q_{v_o}$. It is remarkable that the choice of $\varepsilon$, made on the model simplices, allows to guarantee that such a property holds for \textsl{any} crystalline subdivision $\mathcal{S}_\ell(K)$. This shows the importance of the particular properties of crystalline subdivisions. \\

With these preliminaries, the proof of Proposition \ref{jiggling} is quite short.

\begin{proof}[Proof of Proposition \ref{jiggling}]
In the sequel, for any $\ell \in \N_0$, any $\delta>0$ and any vertex $w$ of $\mathcal{S}_\ell(K)$, we denote by $N_{\delta/\ell}(w)$ the closed $\frac{\delta}{\ell}$-neighborhood of the star of $w$ with respect to the simplicial complex $\mathcal{S}_\ell(K)$. 

First, fix $\delta >0$ as in \rref{delta} and $\varepsilon>0$ provided by Lemma \ref{epsilon}. Then, choose $\ell$ sufficiently large so that, for any $w \in \mathcal{S}_\ell(K)^{(0)}$ and for any choice of points $u, u'$ in $N_{\delta/\ell}(w)$, the Hausdorff distance between $\mathcal{L}_u^q$ and $\mathcal{L}_{u'}^q$ is less than $\varepsilon/2$. 

Let $w_0, \ldots, w_r$ denote the vertices of $\mathcal{S}_\ell(K)$ and fix $w_0'=w_0$. Assume that for some $p < r$, we have chosen $w'_i \in B\left(w_i, \frac{\delta}{\ell}\right)$ for $i = 0, \ldots, p$ such that for any $q$-simplex $\nu =\langle w_{i_0}, w_{i_1} \ldots, w_{i_q}\rangle$ of $\mathcal{S}_\ell(K)$, with $i_0, ..., i_q \leq p$ and for any $u \in \cup_{i = 0}^q N_{\delta/\ell}(w_i)$,
$$
d_\mathcal{H}(L_{\nu'}, \mathcal{L}_u^q) > \frac{\varepsilon}{2},
$$
where $\nu'$ is the jiggled $q$-simplex $\nu' = \langle w'_{i_0}, \ldots, w'_{i_q} \rangle$.
Consider the vertex $w_{p+1}$. For any $q$-simplex of the type $\nu = \langle w_{p+1}, w_{i_1}, \ldots, w_{i_q} \rangle$, with $i_1 < \ldots < i_q \leq p$, we know that the Lebesgue measure of the set
$$
S'_{\varepsilon}(\nu) =\left\{ x \in B\left(w_{p+1},\frac{\delta}{\ell}\right) : d_\mathcal{H}(L_{\langle x, w'_{i_1}, \ldots, w'_{i_q} \rangle}, \mathcal{L}_{w_{p+1}}^q )\leq \varepsilon \right\}
$$ satisfies 
$$
m\left(S'_{\varepsilon}(\nu)\right) < \frac{m\left(B(w_{p+1},\frac{\delta}{\ell})\right)}{A}.
$$
Now define
$$
S'_{\varepsilon} = \bigcup_\nu S'_{\varepsilon}(\nu),
$$
where $\nu$ runs over all $q$-simplices of $\mathcal{S}_\ell(K)$ of the previously described type. Since the number of simplices in $\st_{\mathcal{S}_\ell(K)}(w_{p+1})$ is bounded by $A$, the previous inequality implies that the Lebesgue measure of $S'_{\varepsilon}$ is smaller than the measure of the ball $B\left(w_{p+1},\frac{\delta}{\ell}\right)$. In particular, there exist points $w_{p+1}'$ in $B\left(w_{p+1},\frac{\delta}{\ell}\right)\backslash S'_{\varepsilon_o}$, which means that for any jiggled $q$-simplex $\nu' = \langle w'_{p+1}, w'_{i_1}, \ldots, w'_{i_q}  \rangle$,
$$
d_\mathcal{H}\left(L_{\nu'},\mathcal{L}_{w_{p+1}}^q\right) > \varepsilon.
$$
Consequently, for each point $u \in N_{\delta/\ell}(w_{p+1})$, we have 
$$
d_\mathcal{H}(L_{\nu'}, \mathcal{L}_u^q) > d_\mathcal{H}(L_{\nu'}, \mathcal{L}_{w_{p+1}}^q) - d_\mathcal{H}(\mathcal{L}_{w_{p+1}}^q,\mathcal{L}_u^q) > \frac{\varepsilon}{2}, 
$$
so that $\Omega_u$ is non-degenerate along $L_{\nu'}$. We may of course repeat this procedure until exhausting all the vertices of $\mathcal S_\ell(K)$, thus achieving that the $(q+1)$-skeleton of $\mathcal S'_\ell(K)$ is in general position with respect to $\Omega$.  
\end{proof}

\section{Construction of approximating PL symplectic manifolds}\label{construction PC}

In this section, we are going to show how to approximate any smooth symplectic form by piecewise constant ones. Our first construction used the jiggling lemma but, as was observed by the referee, the jiggling lemma is in fact not necessary. We nevertheless present here our first construction because we find it slightly more understandable and observe in a remark (\rref{nojiggling}) that general position is superfluous. 

\smallskip

Given a closed symplectic manifold $(M,\omega_M)$ of dimension $2n$, the symplectic jiggling lemma allows us to consider a smooth triangulation $h : |K| \to M$ in general position with respect to  $\omega_M$. On each 2-dimensional simplex $\tau$ of $K$, we first define $\omega_\tau$ as the constant 2-form which admits the same orientation and symplectic area as ${h^*\omega_M}$ on $\tau$,~i.e.
$$
\int_{\tau} \omega_\tau = \int_{\tau} {h^*\omega_M}|_\tau.
$$
(The integrand on the right hand side is really the restriction of $h^*\omega_M$ to $\tau$ of course).

\begin{lemma}\label{PC}
There exists a unique piecewise constant 2-form $\omega$ on $K$ whose restriction to any $2$-face $\tau$ of $K$ is $\omega_\tau$. Moreover, $\omega$ is (simplicially) cohomologous to $h^*\omega_M$.
\end{lemma}

\begin{proof}
Consider a $2n$-simplex $\sigma$ of $K$ and fix one of its vertices $v$. We denote by $v_1, \ldots, v_{2n}$ the other vertices of $\sigma$. 
Let us define $\om$ on $\sigma$ by
$$
\omega(v_i-v,v_j-v)=\omega_\tau(v_i-v,v_j-v),
$$
where $\tau$ is the 2-face spanned by $v$, $v_i$ and $v_j$. We have to show that this form is well-defined, i.e.~that its restriction to a 2-face $\tau_0$ of $\sigma$ which does not contain $v$ coincides with the corresponding $\omega_{\tau_0}$. Set $\nu = <\tau_0, v>$, and observe that, by Stokes' theorem,
$$
\int_{\partial \nu} \omega = \int_{\nu} d\omega = 0.
$$
If $\nu_v^{(2)}$ denotes the set of 2-dimensional faces of $\nu$ which contain $v$, it follows from Stokes'~theorem, applied to $h^*\omega_M$, that 
$$
\int_{\tau_0} \omega = - \sum_{\tau \in \nu^{(2)}_v} \int_{\tau} \omega = - \sum_{\tau \in \nu^{(2)}_v} \int_{\tau} h^*\omega_M = \int_{\tau_0} h^*\omega_M,
$$
where $\tau_0$ and the $\tau$'s are oriented as portions of the boundary of $\nu$.
This shows that $\omega$ is well-defined and does not depend on the choice of $v$. 
Observe that by definition, $\omega$ and $h^*\omega_M$ induce the same class in simplicial cohomology. They are therefore cohomologous.
\end{proof}

The Whitney form constructed in Lemma \ref{PC} is denoted by $\mathcal{C}_K(\omega_M)$. It is symplectic provided it is non-degenerate on each $2n$-simplex of $K$. The following result ensures that it is the case for sufficiently fine triangulations. 

\begin{prop}\label{example PC structure}
Let $(M^{2n},\omega_M)$ be a closed symplectic manifold and $h : |K| \to M$ a Whitehead triangulation in general position with respect to $\omega_M$. Then there exists an $\ell_o$ such that $\mathcal{C}_{\mathcal{S}_\ell(K)}(\omega_M)$ is a PL symplectic structure for all $\ell \geq \ell_o$. 
\end{prop}

\begin{proof} 
Set $\omega_1 = h^*\omega_M$. Let us fix a Euclidean norm $\| \cdot \|$  on the vector bundle $\Lambda^2(T^*M)$. Let $\varepsilon > 0$, we shall prove that there exists an $\ell_o \in \N_0$ such that if $\ell \geq \ell_o$ and $\omega=\mathcal{C}_{\mathcal{S}_\ell(K)}(\omega_M)$, 
we have
$$
\| \omega- \omega_1 \| < \varepsilon,
$$
meaning that $\|\omega_\sigma - (\omega_1)|_\sigma\| < \varepsilon$ for all $\sigma \in \mathcal{S}_\ell(K)$. Since for any $\sigma \in \mathcal{S}_\ell(K)^{(2n)}$, the form $(\omega_1)|_\sigma$ is non-degenerate, this will imply that $\omega_\sigma$ is non-degenerate as well. \\

One of the properties of the crystalline subdivision is that 
$$\max_{\sigma \in \mathcal{S}_\ell(K)} \mathrm{diam}(\sigma)$$ 
tends to 0 when $\ell \to \infty$. We can therefore choose an $\ell_o \geq 1$ such that for all $\ell \geq \ell_o$, any $\sigma \in \mathcal{S}_\ell(K)$ and any $x, y \in \sigma$,
$$
\| (\omega_1)_x - (\omega_1)_y  \| < \frac{\varepsilon}{2}.
$$
Consider a 2-simplex $\tau$ of $\mathcal{S}_\ell(K)$ and observe that the form $\omega_\tau$ necessarily coincides with $(\omega_1)_{x_\tau}|_\tau$ for a certain $x_\tau \in \tau$. This is implied by the fact that $\omega_\tau$ and $\omega_1|_\tau$ have the same volume on $\tau$. Consequently, for each $\tau \in \mathcal{S}_\ell(K)^{(2)}$ and $x \in \tau$, we have
$$
\|(\omega_1)_x|_\tau - \omega_\tau \| < \frac{\varepsilon}{2}.
$$
Finally, fix a point $x$ in a $2n$-simplex $\sigma = <v_0, ..., v_{2n}>$ in $\mathcal{S}_\ell(K)$. If $\tau_{ij} = <v_0, v_i, v_j>$
, we have
$$
\| (\omega_1)_x|_{{\tau}_{ij}} - \omega_{{\tau}_{ij}} \| \leq \| (\omega_1)_x|_{{\tau}_{ij}} - (\omega_1)_{\tilde{x}}|_{\tau_{ij}} \| + \| (\omega_1)_{\tilde{x}}|_{\tau_{ij}} - \omega_{\tau_{ij}} \| < \varepsilon,
$$
for some point $\tilde{x}$ in $\tau_{ij}$.
This shows that $\|(\omega_1)_x - \omega_\sigma \| < \varepsilon$ for any $x \in \sigma$. 
\end{proof}

\begin{rem}\label{nojiggling}
There is in fact no need to require the triangulation to be in general position with respect to $\omega_M$. One could define, for each $2$-simplex $\tau$ of $K$, the $2$-form $\omega_\tau \in \Omega^2(\tau)$ to be the unique constant $2$-form on $\tau$ such that 
$$\int_{(\tau, o)} \omega_\tau = \int_{(\tau, o)} h^*\omega_M,$$
for some (hence any) orientation $o$ of $\tau$. The proofs of \lref{PC} and \pref{example PC structure} do not use the fact that the $\om_\tau$'s are nondegenerate and remain thus valid in this more general setting. \\
\end{rem}

Theorem \ref{example PC structure} provides for a large class of examples of PL symplectic structures. In particular, we have the following.

\begin{cor}
A closed PL manifold which admits a symplectic smoothing supports PL symplectic structures.
\end{cor}

\begin{cor}\label{example PC 2}
Let $(M,\omega_M)$ be a closed symplectic manifold. Then there exists a PL symplectic manifold $(P, \omega_P)$ with a piecewise smooth equivalence $h : P \to M$ such that $\omega_P$ and $h^*\omega_M$ are cohomologous and $C^0$-close.
\end{cor}

Let $K$ be a triangulation of $P$ and consider the PC symplectic form $\omega = \mathcal{C}_K(\omega_M)$ given by Corollary \ref{example PC 2}. We do not know how to construct a PS equivalence between those two symplectic structures but we would like to use what has been done before to prove that the PC volume $(\mathcal{C}_K(\omega_M))^n$ triangulates $\Om_M^n$. Notice first that since $\om$ and $h^*\omega_M$ are cohomologous, they have identical total volume. On the other hand, they do not necessarily share the same volume cocycle. This indicates that if we aim to construct a triangulation of the smooth volume form $\omega_M^n$, we should first change the initial triangulation in order to adjust the volumes of the simplices. \\

\begin{prop}\label{ex PC form}
Let $(M,\omega_M)$ be a closed symplectic manifold. There exists a smooth triangulation $h : |K| \to M$ and a PC symplectic form $\omega$ on $K$, cohomologous and $C^0$-close to $h^*\omega_M$, such that 
$$
\int_{\sigma} \overline{h}^*\omega_M^n = \int_{\sigma} \omega^n
$$
for any $\sigma \in K^{(2n)}$
\end{prop}

\begin{proof}
Consider a triangulation $h_o : |K| \to M$ and a PC symplectic form $\omega$ on $|K|$ obtained from Corollary \ref{example PC 2}. Set $\Omega_M = \om_M^n$ and $\Omega = \om^n$. There exists a smooth function $f : M \to \R_0^+$ identically equals to $1$ near the skeleton of $K$ and such that
$$
\int_{h_o(\sigma)} f\Omega_M = \int_{\sigma} \Omega
$$ 
for $\sigma \in K^{(2n)}$. The volume forms $f\Omega_M$ and $\Omega_M$ have the same total volume, and there exists thus, by Moser's argument, a diffeomorphism $\varphi$ of $M$, supported away from the $2$-skeleton, such that $\varphi^*\Omega_M = f\Omega_M$. The fact that $\varphi$ can be chosen to be supported away from the $2$-skeleton comes from a  the choice of a primitive of $(1-f) \Omega$ that vanishes near $|K^{(2)}|$ (choose a first primitive $\beta_o$ and then observe that near $|K^{(2)}|$, it is necessarily closed and therefore exact; if $\gamma$ is a primitive of $\beta_o$ near $|K^{(2)}|$, define $\beta = \beta_o - d(\eta \gamma)$, for a bump function $\eta$ supported near $|K^{(2)}|$ and identically equals to $1$ near $|K^{(2)}|$).
Now define a new triangulation $h = \varphi \circ h_o : |K| \to M$. By construction, it satisfies 
$$
h^*\Omega_M = h_o^*(\varphi^*\Omega_M)=h_o^*(f\Omega_M),
$$
which implies that for any $2n$-dimensional simplex $\sigma$,
$$
\int_{\sigma} h^*\Omega_M = \int_{h_o(\sigma)} f \Omega_M = \int_{\sigma} \Omega.
$$
Moreover, the PC symplectic form associated to $h^*\omega_M$ is $\omega$ as well, since $h|_{K^{(2)}} = h_o|_{K^{(2)}}$. In particular, $h^*\omega_M$ is cohomologous to $h_o^*\omega_M$ and thus to $\omega$.
Observe finally that if the triangulation is sufficiently fine, the diffeomorphism $\varphi$ can be chosen close to the identity~: this follows from the fact that since the volume forms $h^*\Omega_M$ and $\Omega$ are close, we can consider a primitive of their difference which is close to zero. This implies that $\omega$ is also $C^0$-close to $h^*\omega_M$.
\end{proof}

\begin{cor}
Let $(M,\omega_M)$ be a closed symplectic manifold. There exists a smooth triangulation $\tilde{h} : |K| \to M$ and a PC symplectic form $\omega$ cohomologuous and $C^0$-close to $\tilde{h}^*\om_M$ on $K$ such that $\tilde{h}^*\Omega_M=\Omega$.
\end{cor}

\begin{proof}
Start with the triangulation $h : |K| \to M$ provided by Proposition \ref{ex PC form}. Since $h^*\Omega_M$ and $\Omega$ have the same volumes on the $2n$-simplices of $K$, it follows from Theorem~\ref{existence} that there exists a subdivision $\tilde{K}$ of $K$ and a triangulation $\tilde{h} : |\tilde{K}| \to M$ such that $\tilde{h}^*\Omega_M = \Omega$.
\end{proof}

\bibliography{bibjl}
\end{document}